\newtheorem{theorem}{Theorem}[section]
\newtheorem{lemma}[theorem]{Lemma}
\newtheorem{proposition}[theorem]{Proposition}
\newtheorem{corollary}[theorem]{Corollary}
\theoremstyle{definition}
\newtheorem{definition}[theorem]{Definition}
\theoremstyle{remark}
\newtheorem{remark}[theorem]{Remark}
\numberwithin{equation}{section}
\begin{document}
\setcounter{page}{1}

\title[ Pseudo-differential operators   in H\"older spaces ]{ Pseudo-differential operators in H\"older spaces revisited. Weyl-H\"ormander calculus and Ruzhansky-Turunen classes.}

\author[D. Cardona]{Duv\'an Cardona}
\address{
  Duv\'an Cardona:
  \endgraf
  Department of Mathematics  
  \endgraf
  Pontificia Universidad Javeriana.
  \endgraf
  Bogot\'a
  \endgraf
  Colombia
  \endgraf
  {\it E-mail address} {\rm duvanc306@gmail.com;
cardonaduvan@javeriana.edu.co}
  }

\subjclass[2010]{Primary {}.}

\keywords{ Pseudo-differential operators on $R^n$; Toroidal pseudo-differential operators; Weyl-H\"ormander calculus; Ruzhansky-Turunen classes; H\"older spaces}

\begin{abstract}
In this work we obtain  continuity results on H\"older spaces for operators belonging to a Weyl-H\"ormander calculus for metrics such that the class of the associated operators contains, in particular,  certain hypoelliptic laplacians. With our results we recover some historical H\"older boundedness theorems (see R. Beals \cite{Be,Be2}). The action of (periodic) Ruzhansky-Turunen classes of pseudo-differential operators on H\"older spaces also will be investigated.
\textbf{MSC 2010.} {Primary: 35J70, Secondary: 35A27, 47G30.}
\end{abstract} \maketitle
\tableofcontents

\section{Introduction}
\subsection{Outline of the paper}
For every $0<s<1,$ the (Lipschitz) H\"older space $\Lambda^s(\mathbb{R}^n)$ consists of those functions $f$ satisfying
\begin{equation}\label{Holder}
  \Vert f\Vert_{\Lambda^s}:=\sup_{x,y\in\mathbb{R}^n}|f(x+y)-f(y)||x|^{-s}<\infty.
\end{equation} In this work we study pseudo-differential operators on H\"older spaces. These are linear  operators of the  form
\begin{equation}\label{pseudo}
Af(x)\equiv  \sigma(x,D_x)f(x):=\int_{\mathbb{R}^n}e^{i2\pi x\cdot \xi}\sigma(x,\xi)\hat{f}(\xi)d\xi,\,\,\,f\in C^{\infty}_0(\mathbb{R}^n),
\end{equation}
where the function $\widehat{f}$ is the Fourier transform of $f$ and the function $\sigma$ is the so called,  symbol associated to  the operator $\sigma(x,D_x).$ In this paper we give mapping properties   for pseudo-differential operators  (with symbols associated to Weyl-H\"ormander classes) on H\"older spaces. These classes usually denoted by $S(m,g)$ are associated to a H\"ormander metric $g=\{g_{(x,\xi)}:(x,\xi)\in\mathbb{R}^{2n}\}$ on the phase space $\mathbb{R}^n\times \mathbb{R}^n$ and  a weight $m$ on $\mathbb{R}^{2n}.$ The particular case $$g=g^{\rho,\delta}:=\langle \xi\rangle^{2\delta}dx^{2}+\langle \xi\rangle^{-2\rho}d\xi^{2},\,\,m(x,\xi)=\langle \xi\rangle^{m},\,\,0\leq \delta\leq \rho\leq 1,\,\delta<1,$$
where $\langle \xi\rangle:=(1+|\xi|^2)^{\frac{1}{2}},$ corresponds to the $(\rho,\delta)-$H\"ormander class $S^{m}_{\rho,\delta}(\mathbb{R}^{2n}),$ which consists of those symbols satisfying
\begin{equation}\label{RDHC}
  |\partial_{x}^{\beta}\partial_\xi^{\alpha}\sigma(x,\xi)|\leq C_{\alpha,\beta}\langle \xi\rangle^{m-\rho|\alpha|+\delta|\beta|}.
\end{equation}
These classes were introduced by L. H\"ormander in 1967 motivated by the study of hypoelliptic problems as the heat equation. Another important case are the Shubin classes $\Sigma^m(\mathbb{R}^n\times \mathbb{R}^n)$ defined by the condition
\begin{equation}\label{Shubin}
  |\partial_{x}^{\beta}\partial_\xi^{\alpha}\sigma(x,\xi)|\leq C_{\alpha,\beta}\langle x,\xi\rangle^{m-\rho(|\alpha|+|\beta|)},
\end{equation}
where $\langle x, \xi\rangle:=(1+|x|^2+|\xi|^2)^{\frac{1}{2}},$ which from the Weyl-H\"ormander calculus can be obtained with
$$  g=g^{\rho}:=\langle x,\xi \rangle^{-\rho}(dx^2+d\xi^2),\,\,\,m(x,\xi)=\langle x,\xi \rangle^{m},\,\,\,0\leq \rho\leq 1.$$
For symbol classes associated to anharmonic oscillators we refer the reader to Chatzakou, Delgado and Ruzhansky \cite{ChatDelRuz}.
The quantisation process of pseudo-differential operators associates  to each function $\sigma(x,\xi)$ in  suitable classes (of symbols) on the phase space $\mathbb{R}^n\times \mathbb{R}^n,$ a (densely defined) linear operator $\sigma(x,D_{x})$ on the Hilbert space $L^2(\mathbb{R}^n),$ such that the coordinate functions $x_i$ and $\xi_i$ correspond to the operators $x_{j}$ and $D_{j}=-i2\pi\frac{\partial}{\partial x_j},$   and such that the properties of the symbols $\sigma(x,\xi)$ (positivity, boundedness, differentiability, invertibility, homogeneity, integrability, etc.) are reflected in some sense in the properties of the operators $\sigma(x,D_{x}),$ (positivity, mapping properties, invertibility, Fredholmness, geometric information, compactness). To our knowledge, the first general quantization procedure, (in pseudo-differential operators theory) and in many ways still the most satisfactory one, was proposed by Weyl (see H\"ormander\cite{Hor2}) not long after the invention of quantum mechanics.

The symbols $\sigma(x,\xi)$ considered in this work, belong to Weyl-H\"ormander classes $S(m^{-n\varepsilon},g),$ which are associated to the metric
$
  g_{(x,\xi)}(dx,d\xi)=m(x,\xi)^{-2}(\langle \xi \rangle^2 dx^2+d\xi^2),$ $x,\xi\in\mathbb{R}^n,
$
and  the weight
$
  m(x,\xi)=(a(x,\xi)+\langle \xi\rangle)^{\frac{1}{2}}.
$
The symbol $a,$ which is assumed positive and classical, is the principal part of the second order differential operator $L$ defined by
\begin{equation}\label{ST}
  {Lf}=-\sum_{ij}a_{ij}(x)\frac{\partial^2}{\partial{x_i\partial x_j}}f+[\textnormal{partial derivatives of lower order }]f,\,\,\, f\in C_{0}(\mathbb{R}^n).
\end{equation}
We also assume for every $x\in\mathbb{R}^n,$ that the matrix $A(x)=(a_{ij}(x))$ is a positive, semi-definite matrix  of rank $r(x):=\textnormal{rank}(A(x))\geq r_0\geq 1.$ The coefficients $a_{ij}$ are  smooth functions which are uniformly bounded on $\mathbb{R}^n$, together with all their derivatives. Important examples  arise from operators of the form
\begin{equation}\label{Ex1}
  L=-\sum_{j}X_{j}^{*}X_{j}+X_0,\,\,\,L=-\sum_{j}X_{j}^{*}X_{j},
\end{equation} where $\{X_i\}$ is a system of  vector fields  on $\mathbb{R}^n$ satisfying the H\"ormander conditions of order 2 (this means that the vector fields $X_i$ and their commutators provide a basis of $\mathbb{R}^n,$ or equivalently that $\mathbb{R}^{n}=\textnormal{Lie}\{X_i\}_{i}$).

On the other hand,   H\"older spaces on the torus $\mathbb{T}^n=\mathbb{R}^n/\mathbb{Z}^n,$ are the Banach spaces defined for each $0<s\leq 1$ by $$\Lambda^{s}(\mathbb{T}^n)=\{f:\mathbb{T}^n\rightarrow \mathbb{C} : |f|_{\Lambda^s}=\sup_{x,h\in \mathbb{T}^n} {|f(x+h)-f(x)|} {|h|^{-s}} <\infty  \}$$ 
together with the norm $\Vert f\Vert_{\Lambda^{s}}=|f|_{\Lambda^s}+\sup_{x\in \mathbb{T}^n}|f(x)|.$ In the second part of this work we investigate the action of periodic operators on  toroidal H\"older spaces. These  operators have  the form
 \begin{equation}
 a(x,D_{x})f(x)\equiv \textnormal{Op}(a)f(x)=\sum_{\eta\in \mathbb{Z}^n}e^{i2\pi \langle x, \eta\rangle}a(x,\eta)\hat{f}(\eta),
\end{equation}
where the function $a(x,\eta)$ is called, the symbol of $a(x,D_{x}),$ and $\widehat{f}$ is the periodic Fourier transform of $f.$ 
Periodic pseudo-differential operators were introduced by M. Agranovich  \cite{ag} who proposed a global quantization of periodic pseudo-differential operators on the one dimensional torus $\mathbb{R}/\mathbb{Z}\equiv\mathbb{T}$. Later, this theory was widely developed by G. Vainikko,  M. Ruzhansky and V. Turunen (see \cite{Ruz}), and subsequently  generalised on  compact Lie groups. For some recent work on boundedness results of periodic pseudo-differential operators  we refer the reader to the references \cite{Duvan2,Duvan3,periodic,Profe2, s2, Ruz, Ruz-2} where the subject was treated in $L^p$-spaces. Pseudo-differential operators with symbols in H\"ormander classes can be defined on smooth closed manifolds by using local charts (localizations). Agranovich (see \cite{ag}) gives a global definition of pseudo-differential operators on the circle $\mathbb{S}^1,$ (instead of the local formulation on the circle as a manifold). By using the Fourier transform Agranovich's definition was readily generalizable to the $n-$dimensional torus $\mathbb{T}^n.$ Indeed, G. Vainikko, M. Ruzhansky and V. Turunen introduced H\"ormander classes on any $n$-torus $\mathbb{T}^n$ of the following way:  given $m\in \mathbb{R},\, 0\leq \rho,\delta \leq 1,$ we say that $a \in S^{m}_{\rho,\delta}(\mathbb{T}^n\times \mathbb{Z}^n)$ ($(\rho,\delta)-$ symbol classes) if 
 \begin{equation}
\forall \alpha,\beta\in\mathbb{N}^n,\exists C_{\alpha,\beta}>0,\,\, |\Delta^{\alpha}_{\xi}\partial^{\beta}_{x}a(x,\xi)|\leq C_{\alpha,\beta}\langle \xi \rangle^{m-\rho|\alpha|+\delta|\beta|}.
 \end{equation} Here $\Delta_\xi$ is the usual difference operator on sequences defined by
 \begin{equation}
 \Delta_{\xi_j} a(\xi):=a(\xi+e_j)-a(\xi);\,\,\,\,e_{j}(k):=\delta_{jk},\,\,\,\Delta^{\alpha}_{\xi}=\Delta_{\xi_1}^{\alpha_1}\cdots \Delta_{\xi_n}^{\alpha_n},
 \end{equation}
where $\alpha=(\alpha_1,\cdots,\alpha_n)\in\mathbb{N}_0^n.$
 It is a non-trivial result, that the  pseudo-differential  calculus with symbols on $(\rho,\delta)-$classes by Vainikko-Ruzhansky-Turunen  and the periodic H\"ormander calculus via localisations, are equivalent, this means that
$$ \Psi^m_{\rho,\delta}(\mathbb{T}^n\times \mathbb{Z}^n)=\Psi^m_{\rho,\delta}(\mathbb{T}^n\times \mathbb{R}^n) . $$ 
This fact is known as the  McLean equivalence theorem (see \cite{Mc}).  The principal consequence of the McLean equivalence is that we can transfer the boundedness properties of pseudo-differential operators on $\mathbb{R}^n$ (in H\"ormander classes) to pseudo-differential operators on the torus. So, if we denote by $\Lambda^s(\mathbb{R}^n)$  the H\"older space with regularity order $s,$ $0<s<1,$ then it is well known that $T_\sigma\in \Psi_{1,0}^0(\mathbb{R}^n\times \mathbb{R}^n )$   extends to a bounded operator on $\Lambda^s(\mathbb{R}^n)$ (see E. Stein \cite{Eli}, p. 253).
 In view of the McLean equivalence, if $\sigma(x,D_x)\in \Psi^0_{1,0}(\mathbb{T}^n\times \mathbb{Z}^n) $ is a periodic operator, then \begin{eqnarray}
\sigma(x,D_x):\Lambda^s(\mathbb{T}^n)\rightarrow \Lambda^s(\mathbb{T}^n),\,\,0<s<1,
\end{eqnarray} 

 extends to a bounded operator.  So,  the problem of the boundedness of periodic pseudo-differential operators can be reduced to consider symbols of limited regularity.

\subsection{State of the art and  main results}
In general, classes of bounded pseudo-differential operators on Lebesgue spaces $L^p{(\mathbb{R}^n)},$ $1<p<\infty,$ also provide bounded operators on H\"older spaces $\Lambda^s(\mathbb{R}^n),$ Besov spaces $B^s_{p,q}(\mathbb{R}^n),$ Triebel-Lizorkin spaces $F^s_{p,q}(\mathbb{R}^n),$ and several types of function spaces. Through of the metric $g^{1,0}_{(x,\xi)}(dx,d\xi)=dx^2+\langle\xi\rangle^{-2}d\xi^2,$ and the weight $m(x,\xi)=1$ we recover the Kohn-Nirenberg class of order zero $S^0(\mathbb{R}^n)=S(1,g^{1,0})$ which give bounded pseudo-differential operators  on $L^p$ spaces,  $1<p<\infty,$ and on  every H\"older space $\Lambda^s.$ In fact, for certain metrics, R. Beals showed that the classes $S(1,g)$ give bounded pseudo-differential operators  in $L^p$ spaces as well as in H\"older spaces (see the classical references  Beals \cite{Be} and Beals \cite{Be2}; the case of periodic operators was treated in \cite{Car,Car1}).   On the other hand, if we define
\begin{equation}\label{q0}
  Q_0:=r_0+2(n-r_0),\,\,\varepsilon_0:=\frac{Q_0}{2n}-\frac{1}{2},
\end{equation} it was proved by J. Delgado in \cite{Profe2}, that the set of operators with symbols in the classes $S(m^{-n\varepsilon},g)$ provides bounded operators on $L^p$-spaces for all $1<p<\infty$ and $\varepsilon_0\leq \varepsilon<\frac{Q_0}{2n}.$ Moreover,  for $0\leq \beta<\varepsilon_0,$ the operators in the classes $S(m^{-\beta},g)$ are $L^p$-bounded provided that $|1/2-1/p|\leq \beta/2n\varepsilon_0.$ These are important extensions  of some results by C. Fefferman and R. Beals. In this paper we will prove the following.
\begin{itemize}
\item Let us assume $\varepsilon_0\leq \varepsilon<\frac{Q_0}{2n}$ and let $\sigma\in S(m^{-n\varepsilon},g).$  Then 
the pseudo-differential operator $\sigma(x,D_x)$ extends to a bounded operator on $\Lambda^s(\mathbb{R}^n)$ for all $0<s<1.$ 
\item Let us assume $0\leq \beta \leq n\varepsilon_0$ and let $\sigma\in S(m^{-\beta},g).$  Then 
the pseudo-differential operator $\sigma(x,D_x)$ extends to a bounded operator from  $\Lambda^s(\mathbb{R}^n)$ into  $\Lambda^{s-(n\varepsilon_0-\beta)}(\mathbb{R}^n)$  for all $0<s<1$ provided that $0<s-(n\varepsilon_0-\beta)<1.$ 
\end{itemize}  In particular, if the operator $L$ is elliptic we have $Q_0=2n,$ $\varepsilon_0=0,$ and then we recover the classical H\"older estimate by R. Beals for $S(1,g)$-classes (see Beals \cite{Be,Be2}).

On the other hand, our results for periodic pseudo-differential operators can be summarised as follow.
\begin{itemize} \item  Let $0\leq \varepsilon <1$ and $k:=[\frac{n}{2}]+1,$  let $\sigma:\mathbb{T}^n\times \mathbb{Z}^n\rightarrow \mathbb{C}$ be a symbol such that  $|\Delta_{\xi}^{\alpha}\sigma(x,\xi)|\leq C_{\alpha}\langle \xi \rangle^{-\frac{n}{2}\varepsilon-(1-\varepsilon)|\alpha|},$  for $|\alpha| \leq k.$ Then  $\sigma(x,D):\Lambda^s(\mathbb{T}^n)\rightarrow\Lambda^s(\mathbb{T}^n)$ extends to a bounded linear operator for all $0<s<1.$ 
\item Let $0<\rho\leq 1,$ $0\leq \delta\leq 1,$ $\ell\in\mathbb{N},$ $k:=[\frac{n}{2}]+1,$  and let $A:C^{\infty}(\mathbb{T}^n)\rightarrow C^{\infty}(\mathbb{T}^n)$ be a pseudo-differential operator with symbol $\sigma$ satisfying  
\begin{equation}\label{eqMaint'}
\vert \partial_x^\beta\Delta_{\xi}^{\alpha}\sigma(x,\xi)\vert\leq C_{\alpha}\langle \xi \rangle^{-m-\rho|\alpha|+\delta|\beta|}
\end{equation}
 for all $|\alpha| \leq k,$ $|\beta|\leq \ell.$ Then  $A:\Lambda^s(\mathbb{T}^n)\rightarrow \Lambda^s(\mathbb{T}^n)$ extends to a bounded linear operator for all $0<s<1$ provided that $m\geq \delta \ell+\frac{n}{2}(1-\rho).$
\end{itemize}
The periodic conditions above  provide Fefferman type conditions for the H\"older boundedness of periodic operators. These estimates extend the H\"older results in \cite{Car} and \cite{Car1}.

This paper is organized as follows. In Section \ref{preliminaries} we present some basics on Weyl-H\"ormander classes and the periodic Ruzhansky-Turunen classes.  Finally, in Section \ref{proofs} we prove our main results. 
\section{Weyl-H\"ormander and Ruzhansky-Turunen classes}\label{preliminaries}
\subsection{Weyl-H\"ormander classes: $S(m,g)$}
Now, we provide some notions on the Weyl-H\"ormander calculus. An extensive treatment on the subject can be found in H\"ormander \cite{Hor2}, but we present only the necessary elements in order to get a simple  presentation.  The Kohn-Nirenberg quantisation procedure (or classical quantisation) associates to every $\sigma\in\mathscr{S}'(\mathbb{R}^{2n})$ the operator
\begin{equation}\label{cl}
  \sigma(x,D_x)f(x):=\int_{\mathbb{R}^n}e^{i2\pi x\cdot \xi}\sigma(x,\xi)\widehat{f}(\xi)d\xi=\int_{\mathbb{R}^n}\int_{\mathbb{R}^n}e^{i2\pi (x-y)\cdot \xi}\sigma(x,\xi)d\xi f(y)dy,
\end{equation}
where  $f\in C^{\infty}_0(\mathbb{R}^n). $ On the other hand the Weyl quantization of $\sigma\in\mathscr{S}'(\mathbb{R}^{2n})$ is given by the operator
\begin{equation}\label{Wq}
 \sigma^{\omega}(x,D_x)f(x):= \int_{\mathbb{R}^n}\int_{\mathbb{R}^n}e^{i2\pi (x-y)\cdot \xi}\sigma(\frac{1}{2}(x+y),\xi)d\xi f(y)dy,\,\,\,f\in C^{\infty}_0(\mathbb{R}^n).
\end{equation}
There exists a relation between the classic quantization and the Weyl quantization (see \cite{Profe3,Profe2}). For the Weyl quantisation, the composition rule can be formulated in terms of the symplectic form $\langle (x,\xi),(y,\eta)\rangle_\omega:=y\cdot \xi-x\cdot \eta,$ trough of the  operation
\begin{equation}\label{operation}
  a\# b(X):=\frac{1}{\pi^{2n}}\int_{\mathbb{R}^{2n}}\int_{\mathbb{R}^{2n}}e^{-2i \langle X-Y,X-Z\rangle_{\omega}}a(Y)b(Z)dYdZ,\,\,\,\,\,X=(x,\xi).
\end{equation}
satisfying the identity
\begin{equation}\label{identitycompo}
  (a\#b)^\omega=a^\omega\circ b^\omega.
\end{equation}
$S(m,g)$-classes are associated to H\"ormander metrics which are  special Riemannian metrics $g=(g_X)_{X\in \mathbb{R}^{2n}}$ on $\mathbb{R}^{2n},$ satisfying the following properties.
\begin{itemize}
  \item{Continuity}: there exist positive constants $C,c,c'$ such that $g_{X}(Y)\leq C$ implies \begin{equation}\label{continuity}
                c'\cdot g_{X+Y}(Z)\leq g_{X}(Z)\leq c\cdot g_{X+Y}(Z), \textnormal{   where   } X,Y,Z\in\mathbb{R}^{2n}.
              \end{equation}
  \item{Uncertainty principle}: for $X,Y,T,Z\in\mathbb{R}^{2n}$ and
  \begin{equation}\label{Incer}
    g_{X}^{\langle\cdot,\cdot \rangle_{\omega}}(T)=\sup_{Z\neq 0}\frac{  \langle T,Z \rangle_{\omega}^2}{g_X(Z)}
  \end{equation} the metric $g$ satisfies the uncertainty principle if
  \begin{equation}\label{eqincert}
  \lambda_g(X):=  \inf_{T\neq 0}(\frac{g_{X}^{\langle\cdot,\cdot \rangle_{\omega}}(T)  }{g_X(T)})^{\frac{1}{2}}\geq 1.
  \end{equation}
  \item{Temperancy}: we say that $g$ is a temperate metric if there exists $\overline{C}>0$ and $J\in\mathbb{N}$ satisfying
      \begin{equation}\label{temperancy}
        (\frac{g_X(\cdot)}{g_Y(\cdot)})^{\pm 1}\leq \overline{C}(1+g_Y^{\langle\cdot,\cdot\rangle_\omega}(X-Y))^{J}.
      \end{equation}
\end{itemize}
\ On the other hand, classes $S(m,g)$ associated to H\"ormander metrics $g$ require the notion of $g$-admissible weight $M,$ which are  strictly positive functions satisfying
\begin{itemize}
  \item{Continuity:} there exists $D>0$ such that
  \begin{equation}\label{continuityweigh}
    (\frac{M(X+Y)}{M(X)})^{\pm 1}\leq D,\,\,\,\,\textnormal{if} \,\,\,\,g_{X}(Y)\leq \frac{1}{D}.
  \end{equation}

  \item{Temperancy:} there exists $D'>0$ and $N_0\in\mathbb{N}$ such that
  \begin{equation}\label{temperancyweigh}
    (\frac{M(Y)}{M(X)})^{\pm 1}\leq D'(1+g^{ \langle\cdot,\cdot\rangle_\omega }_{Y}(X-Y))^N,\,\,\,\, \textnormal{if}  \,\,\,\,g_{X}(Y)\leq \frac{1}{D'}.
  \end{equation}
  
\end{itemize}
We end this section with the definition of $S(m,g)$ Weyl-H\"ormander classes.

\begin{definition}[Weyl-H\"ormander classes]
  Let us assume that $g$ is a  H\"ormander metric and let $m$ be a $g$-admissible weight. The class $S(m,g)$ consists of those smooth functions $\sigma$ on $\mathbb{R}^{2n}$ satisfying the symbol inequalities
  \begin{equation}\label{Smgdefi}
    |\sigma^{(k)}(T_1,T_2,\cdots,T_k)|\leq C_k M(X) \cdot (g_X(T_1)g_X(T_2)\cdots g_X(T_k)))^{\frac{1}{2}}
  \end{equation}
\end{definition}
For every symbol $\sigma\in S(m,g)$ we denote by $\Vert \sigma \Vert_{S(m,g),k}$ the minimum $C_k$ satisfying the inequality \eqref{Smgdefi}.
\subsection{Ruzhansky-Turunen classes}

The toroidal H\"older spaces are the Banach spaces defined for each $0<s\leq 1$ by $$\Lambda^{s}(\mathbb{T}^n)=\{f:\mathbb{T}^n\rightarrow \mathbb{C} : |f|_{\Lambda^s}=\sup_{x,h\in \mathbb{T}^n} {|f(x+h)-f(x)|} {|h|^{-s}} <\infty  \}$$ 
together with the norm $\Vert f\Vert_{\Lambda^{s}}=|f|_{\Lambda^s}+\sup_{x\in \mathbb{T}^n}|f(x)|.$

In our analysis of periodic operators on H\"older spaces we use the standard notation  (see \cite{ Hor1, Hor2, Ruz, Wong}). The discrete Schwartz space $\mathcal{S}(\mathbb{Z}^n)$ denote the space of discrete functions $\phi:\mathbb{Z}^n\rightarrow \mathbb{C}$ such that 
 \begin{equation}
 \forall M\in\mathbb{R}, \exists C_{M}>0,\, |\phi(\xi)|\leq C_{M}\langle \xi \rangle^M,
 \end{equation}
where $\langle \xi \rangle=(1+|\xi|^2)^{\frac{1}{2}}.$ The toroidal Fourier transform is defined for any $f\in C^{\infty}(\mathbb{T}^n)$ by $$\hat{f}(\xi)=\int_{\mathbb{T}^n}e^{-i2\pi\langle x,\xi\rangle}f(x)dx,\,\,\xi\in\mathbb{Z}^n,\,\,\,\,\langle x,\xi\rangle=x_1\xi_1+\cdots +x_n\xi_n.$$ The Fourier inversion formula is given by $$f(x)=\sum_{\xi\in\mathbb{Z}^n}e^{i2\pi\langle x,\xi \rangle }\hat{u}(\xi),\,\,x\in\mathbb{T}^n.$$ Now, the periodic H\"ormander class $S^m_{\rho,\delta}(\mathbb{T}^n\times \mathbb{R}^n), where\,\, 0\leq \rho,\delta\leq 1,$ consists of those complex (1-periodic) functions $a(x,\xi)$ in $x$, which are smooth in $(x,\xi)\in \mathbb{T}^n\times \mathbb{R}^n$ and which satisfy toroidal symbols inequalities
\begin{equation}\label{css}
|\partial^{\beta}_{x}\partial^{\alpha}_{\xi}a(x,\xi)|\leq C_{\alpha,\beta}\langle \xi \rangle^{m-\rho|\alpha|+\delta|\beta|}.
\end{equation}
Symbols in $S^m_{\rho,\delta}(\mathbb{T}^n\times \mathbb{R}^n)$ are symbols in $S^m_{\rho,\delta}(\mathbb{R}^n\times \mathbb{R}^n)$ (see \cite{Hor1, Ruz}) with order $m$ which are 1-periodic in $x.$
If $a(x,\xi)\in S^{m}_{\rho,\delta}(\mathbb{T}^n\times \mathbb{R}^n),$ the corresponding pseudo-differential operator is defined by
\begin{equation}\label{hh}
a(x,D_x)u(x)=\int_{\mathbb{T}^n}\int_{\mathbb{R}^n}e^{i2\pi\langle x-y,\xi \rangle}a(x,\xi)u(y)d\xi dy,\,\, u\in C^{\infty}(\mathbb{T}^n).
\end{equation}
The set $S^m_{\rho,\delta}(\mathbb{T}^n\times \mathbb{Z}^n),\, 0\leq \rho,\delta\leq 1,$ consists of  those functions $a(x, \xi)$ which are smooth in $x$  for all $\xi\in\mathbb{Z}^n$ and which satisfy

\begin{equation}\label{cs}
\forall \alpha,\beta\in\mathbb{N}^n,\exists C_{\alpha,\beta}>0,\,\, |\Delta^{\alpha}_{\xi}\partial^{\beta}_{x}a(x,\xi)|\leq C_{\alpha,\beta}\langle \xi \rangle^{m-\rho|\alpha|+\delta|\beta|}.
 \end{equation}

The operator $\Delta_\xi^\alpha$ in  \eqref{cs} is the difference operator which is defined as follows. First, if $f:\mathbb{Z}^n\rightarrow \mathbb{C}$ is a discrete function and $(e_j)_{1\leq j\leq n}$ is the canonical basis of $\mathbb{R}^n,$
$
(\Delta_{\xi_{j}} f)(\xi)=f(\xi+e_{j})-f(\xi).
$
If $k\in\mathbb{N},$ denote by $\Delta^k_{\xi_{j}}$  the composition of $\Delta_{\xi_{j}}$ with itself $k-$times. Finally, if $\alpha\in\mathbb{N}^n,$ $\Delta^{\alpha}_{\xi}= \Delta^{\alpha_1}_{\xi_{1}}\cdots \Delta^{\alpha_n}_{\xi_{n}}.$

The main object here are the toroidal operators (or periodic operators) with symbols $a(x,\xi).$ They are defined as (in the sense of Vainikko, Ruzhansky and Turunen)
\begin{equation}\label{aa}
a(x,D_x)u(x):=\textnormal{Op}(a)u=\sum_{\xi\in\mathbb{Z}^n}e^{i 2\pi\langle x,\xi\rangle}a(x,\xi)\hat{u}(\xi),\,\, u\in C^{\infty}(\mathbb{T}^n).
\end{equation}

The relation on toroidal and euclidean symbols will be explain as follows. There exists a process to interpolate the second argument of symbols on $\mathbb{T}^n\times \mathbb{Z}^n$ in a smooth way to get a symbol defined on $\mathbb{T}^n\times \mathbb{R}^n.$
\begin{proposition}\label{eq}
Let $0\leq \delta \leq 1,$ $0< \rho\leq 1.$ The symbol $a\in S^m_{\rho,\delta}(\mathbb{T}^n\times \mathbb{Z}^n)$ if and only if there exists  a Euclidean symbol $a'\in S^m_{\rho,\delta}(\mathbb{T}^n\times \mathbb{R}^n)$ such that $a=a'|_{\mathbb{T}^n\times \mathbb{Z}^n}.$
\end{proposition}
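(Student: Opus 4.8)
The plan is to prove the two implications separately: the restriction of a Euclidean symbol to the lattice is routine, while the construction of a Euclidean extension of a toroidal symbol is the substantive part. For the easy implication, suppose $a'\in S^m_{\rho,\delta}(\mathbb{T}^n\times\mathbb{R}^n)$ and set $a=a'|_{\mathbb{T}^n\times\mathbb{Z}^n}$. Since $\partial_x^\beta$ commutes with restriction in $\xi$, it suffices to control the differences $\Delta_\xi^\alpha$. I would record the elementary identity that a first difference is an average of a derivative, $\Delta_{\xi_j}g(\xi)=\int_0^1(\partial_{\xi_j}g)(\xi+te_j)\,dt$, and iterate it to express $\Delta_\xi^\alpha\partial_x^\beta a(x,\xi)$ as an integral of $\partial_\xi^\alpha\partial_x^\beta a'(x,\xi+t)$ over a bounded set of shifts $t$. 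Because $\langle\xi+t\rangle\approx\langle\xi\rangle$ uniformly for $t$ in that bounded set (Peetre's inequality), the estimate \eqref{css} transfers verbatim to the discrete estimate \eqref{cs}, so $a\in S^m_{\rho,\delta}(\mathbb{T}^n\times\mathbb{Z}^n)$.

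For the converse I would construct the extension by interpolation against a fixed lattice kernel. First I would fix a function $\Phi\in C_c^\infty(\mathbb{R}^n)$ whose integer translates form a partition of unity, $\sum_{m\in\mathbb{Z}^n}\Phi(\cdot-m)\equiv1$, and set $\Theta:=\widehat{\Phi}$. Then $\Theta\in\mathcal{S}(\mathbb{R}^n)$, and computing the Fourier coefficients of the (constant) periodization of $\Phi$ gives the sampling property $\Theta(k)=\widehat{\Phi}(k)=\delta_{0,k}$ for $k\in\mathbb{Z}^n$. I would then define
\[
a'(x,\xi):=\sum_{\eta\in\mathbb{Z}^n}a(x,\eta)\,\Theta(\xi-\eta),
\]
which is smooth in $(x,\xi)$ (the series and all its term-by-term derivatives converging absolutely, since $\Theta$ and its derivatives decay rapidly while $a$ grows only polynomially), and which satisfies $a'(x,k)=a(x,k)$ for every $k\in\mathbb{Z}^n$ by the sampling property; hence $a'|_{\mathbb{T}^n\times\mathbb{Z}^n}=a$, as required.

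It remains to verify the Euclidean inequalities \eqref{css} for $a'$, and this is the main obstacle. Differentiating gives $\partial_x^\beta\partial_\xi^\alpha a'(x,\xi)=\sum_{\eta}(\partial_x^\beta a)(x,\eta)\,(\partial^\alpha\Theta)(\xi-\eta)$, and here a naive estimate using $|\partial^\alpha\Theta(\xi-\eta)|\le C_N\langle\xi-\eta\rangle^{-N}$ only recovers the order $m+\delta|\beta|$, missing the gain $\langle\xi\rangle^{-\rho|\alpha|}$. To capture this gain I must transfer the $\xi$-derivatives falling on the kernel onto finite differences of the symbol: the key step is a repeated summation by parts (Abel summation) in the lattice variable $\eta$, using that the forward difference $\Delta_\eta$ is, up to a unit shift, the adjoint of the backward difference and that $\Theta$ can be arranged to have the vanishing-moment/telescoping structure that lets $\partial^\alpha\Theta(\xi-\eta)$ be rewritten through differences. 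This produces a representation of the form $\sum_{|\gamma|=|\alpha|}\sum_\eta(\Delta_\eta^\gamma\partial_x^\beta a)(x,\eta)\,\Theta_{\alpha,\gamma}(\xi-\eta)$ with auxiliary Schwartz kernels $\Theta_{\alpha,\gamma}$ built from $\Theta$. Inserting the hypothesis $|\Delta_\eta^\gamma\partial_x^\beta a(x,\eta)|\le C\langle\eta\rangle^{m-\rho|\alpha|+\delta|\beta|}$ together with the rapid decay of $\Theta_{\alpha,\gamma}$ and Peetre's inequality $\langle\eta\rangle^{m-\rho|\alpha|+\delta|\beta|}\lesssim\langle\xi\rangle^{m-\rho|\alpha|+\delta|\beta|}\langle\xi-\eta\rangle^{|m-\rho|\alpha|+\delta|\beta||}$, the resulting series $\sum_{\eta\in\mathbb{Z}^n}\langle\xi-\eta\rangle^{-N+|m-\rho|\alpha|+\delta|\beta||}$ converges for $N$ large and yields exactly the factor $\langle\xi\rangle^{m-\rho|\alpha|+\delta|\beta|}$. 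The delicate point, and the place where the hypothesis $\rho>0$ enters (so that each difference genuinely lowers the order and the gain $\langle\xi\rangle^{-\rho|\alpha|}$ matches the derivatives taken), is the bookkeeping of the summation by parts and the construction of a kernel $\Theta$ simultaneously realizing the sampling property and the moment structure needed for that manipulation; the remaining estimate is the absolutely convergent sum described above.
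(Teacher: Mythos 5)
The paper itself does not prove this proposition: its ``proof'' is a citation to McLean and to Ruzhansky--Turunen, and your argument is, in substance, exactly the extension construction from the cited Ruzhansky--Turunen source (interpolation of the toroidal symbol against $\Theta=\widehat{\Phi}$ for a compactly supported $\Phi$ whose integer translates sum to one). Your easy direction (writing $\Delta_\xi^\alpha$ as an iterated integral of $\partial_\xi^\alpha$ over bounded shifts and invoking Peetre) is complete and correct, and your identification of the obstacle in the converse --- that the naive estimate loses the factor $\langle\xi\rangle^{-\rho|\alpha|}$ and that derivatives on the kernel must be traded for differences on the symbol --- is the right one.

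The one place where you assert rather than prove is the claim that ``$\Theta$ can be arranged to have the vanishing-moment/telescoping structure that lets $\partial^\alpha\Theta(\xi-\eta)$ be rewritten through differences.'' This is the entire content of the hard direction and it does not follow from the partition-of-unity property alone; you need the additional (compatible) requirement $\mathrm{supp}\,\Phi\subset(-1,1)^n$. With that in place the mechanism is explicit: $\partial_{\xi_j}\Theta(\xi)=\widehat{(-2\pi i x_j)\Phi}(\xi)$, and since $e^{2\pi i x_j}-1$ vanishes on $\mathrm{supp}\,\Phi$ only at $x_j=0$ and to first order, the function $\Psi_j:=\frac{-2\pi i x_j}{e^{2\pi i x_j}-1}\Phi$ lies in $C_c^\infty(\mathbb{R}^n)$, whence $\partial_{\xi_j}\Theta(\xi)=\widehat{\Psi_j}(\xi-e_j)-\widehat{\Psi_j}(\xi)$. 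Substituting this into $\sum_\eta a(x,\eta)\,\partial_{\xi_j}\Theta(\xi-\eta)$ and reindexing $\eta\mapsto\eta+e_j$ in the shifted term produces $-\sum_\eta(\Delta_{\eta_j}a)(x,\eta)\,\widehat{\Psi_j}(\xi-\eta-e_j)$, and iteration gives precisely your representation $\sum_{|\gamma|=|\alpha|}\sum_\eta(\Delta_\eta^\gamma\partial_x^\beta a)(x,\eta)\,\Theta_{\alpha,\gamma}(\xi-\eta)$ with Schwartz kernels $\Theta_{\alpha,\gamma}$. From there your Peetre argument closes the estimate. So the proof is correct once this identity is supplied; without it the key step is only named, not carried out. (Minor point: the hypothesis $\rho>0$ is not what makes this telescoping work --- the manipulation above is valid for $\rho=0$ as well --- so your parenthetical attributing the role of $\rho>0$ to this step is not quite the right explanation.)
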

\begin{proof} The proof can be found in \cite{Mc, Ruz}.
\end{proof}

It is a non trivial fact, however, that the definition of pseudo-differential operator on a torus given by Agranovich (equation \ref{aa} )  and H\"ormander (equation \ref{hh}) are equivalent. McLean (see \cite{Mc}) prove this result for all the H\"ormander classes $S^m_{\rho,\delta}(\mathbb{T}^n\times \mathbb{Z}^n).$ A different proof to this fact can be found in \cite{Ruz}, Corollary 4.6.13 by using a periodisation technique.

\begin{proposition}$\label{eqc}($Equality of classes$).$ For every $0\leq \delta \leq 1$ and $0<\rho\leq 1,$ we have $\Psi^{m}_{\rho,\delta}(\mathbb{T}^n\times \mathbb{Z}^n)=\Psi^{m}_{\rho,\delta}(\mathbb{T}^n\times \mathbb{R}^n).$
\end{proposition}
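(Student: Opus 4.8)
The plan is to deduce the equality of the two operator algebras from the symbol-level correspondence of Proposition \ref{eq} together with a single identity comparing the two quantisations. Concretely, I would first observe that both inclusions reduce to showing that for $a'\in S^m_{\rho,\delta}(\mathbb{T}^n\times\mathbb{R}^n)$ and its restriction $a:=a'|_{\mathbb{T}^n\times\mathbb{Z}^n}$ one has, on $C^\infty(\mathbb{T}^n)$,
\begin{equation}\label{keyid}
\mathrm{Op}(a)u=a'(x,D_x)u,
\end{equation}
where $\mathrm{Op}(a)$ is the toroidal quantisation \eqref{aa} and $a'(x,D_x)$ is the operator \eqref{hh}. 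Indeed, Proposition \ref{eq} (which is available precisely because $0<\rho\leq1$) guarantees that every Euclidean symbol restricts to a toroidal one and, conversely, that every toroidal symbol extends to a Euclidean one; hence \eqref{keyid} immediately yields $\Psi^m_{\rho,\delta}(\mathbb{T}^n\times\mathbb{R}^n)\subseteq\Psi^m_{\rho,\delta}(\mathbb{T}^n\times\mathbb{Z}^n)$ by restriction and the reverse inclusion by extension. Thus the whole statement collapses to the quantisation identity \eqref{keyid}.

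To prove \eqref{keyid} I would test both operators on the characters $e_\eta(x):=e^{i2\pi\langle x,\eta\rangle}$, $\eta\in\mathbb{Z}^n$, which span a dense subspace of $C^\infty(\mathbb{T}^n)$. On the discrete side, since $\widehat{e_\eta}(\xi)=\delta_{\xi,\eta}$ in the toroidal Fourier transform, the definition \eqref{aa} gives at once $\mathrm{Op}(a)e_\eta=a(x,\eta)\,e_\eta$. On the continuous side, the (Euclidean, distributional) Fourier transform of the periodic character $e_\eta$ is the Dirac mass at the integer point $\eta\in\mathbb{Z}^n\subset\mathbb{R}^n$, so that $a'(x,D_x)e_\eta=a'(x,\eta)\,e_\eta$. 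Because $a(x,\eta)=a'(x,\eta)$ by the very definition of the restriction, the two agree on every $e_\eta$, hence on all trigonometric polynomials. The identity \eqref{keyid} then follows by density once one knows that both $\mathrm{Op}(a)$ and $a'(x,D_x)$ are continuous on the Sobolev scale $H^s(\mathbb{T}^n)$, which is supplied by the symbol inequalities \eqref{cs} and \eqref{css}. At the level of kernels this matching is the statement that the oscillatory kernel $K_{a'}(x,z)=\int_{\mathbb{R}^n}e^{i2\pi\langle z,\xi\rangle}a'(x,\xi)\,d\xi$ of $a'(x,D_x)$ periodises, by Poisson summation, to
\begin{equation}\label{poisson}
\sum_{k\in\mathbb{Z}^n}K_{a'}(x,z+k)=\sum_{\xi\in\mathbb{Z}^n}a'(x,\xi)\,e^{i2\pi\langle z,\xi\rangle}=\sum_{\xi\in\mathbb{Z}^n}a(x,\xi)\,e^{i2\pi\langle z,\xi\rangle},
\end{equation}
which is exactly the kernel of $\mathrm{Op}(a)$.

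The main obstacle is the rigorous justification of the step \eqref{poisson}, because for $m\geq0$ the kernel $K_{a'}(x,\cdot)$ is only a distribution carrying a conormal singularity at the origin, so neither the $\xi$-integral nor the periodising sum converges absolutely, and one must also check that $a'(x,D_x)$ genuinely maps periodic distributions to periodic distributions. I would handle this by the standard regularisation/density scheme: first prove \eqref{keyid} for symbols decaying rapidly in $\xi$, where \eqref{poisson} is the classical Poisson summation formula and all integrals and sums converge absolutely; and then pass to the general case by inserting cut-offs $\chi(\varepsilon\xi)$ and letting $\varepsilon\to0^{+}$, using the symbol inequalities \eqref{css}--\eqref{cs} to control the error uniformly in the relevant topology. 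This convergence analysis is precisely the content of the McLean equivalence theorem \cite{Mc} and of the periodisation argument in \cite[Corollary 4.6.13]{Ruz}, to which the technical verification can be referred.
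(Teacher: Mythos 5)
The paper offers no proof of this proposition beyond citing McLean \cite{Mc} and the periodisation argument of \cite[Corollary 4.6.13]{Ruz}, so your sketch actually supplies more detail than the source it is being compared against. Your reduction is correct: once one knows that the Euclidean quantisation \eqref{hh} of $a'$ and the toroidal quantisation \eqref{aa} of $a=a'|_{\mathbb{T}^n\times\mathbb{Z}^n}$ coincide on $C^{\infty}(\mathbb{T}^n)$, both inclusions follow from the restriction and extension halves of Proposition \ref{eq}, and your verification of that identity on the characters $e_\eta(x)=e^{i2\pi\langle x,\eta\rangle}$ is sound, since the distributional Fourier transform of a periodic function is supported on $\mathbb{Z}^n$ and hence the Euclidean operator only samples the symbol at integer frequencies. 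Be aware, however, of where the real weight of the theorem sits: the nontrivial half is the extension direction of Proposition \ref{eq} (building $a'\in S^m_{\rho,\delta}(\mathbb{T}^n\times\mathbb{R}^n)$ out of a merely discrete symbol $a\in S^m_{\rho,\delta}(\mathbb{T}^n\times\mathbb{Z}^n)$, which is where the hypothesis $\rho>0$ enters), together with the justification of the Poisson-summation/periodisation identity for symbols of nonnegative order, where the kernel is only a distribution with a conormal singularity. You defer both of these to \cite{Mc} and \cite{Ruz} --- exactly as the paper does --- so your argument is a correct reduction rather than an independent proof, and it contains no gap beyond what the paper itself leaves to the literature.
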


 For the multilinear aspects of the theory of periodic operators we refer the reader to Cardona and Kumar \cite{CardonaKumar}.

\section{Pseudo-differential operators in H\"older spaces }\label{proofs}
\subsection{$S(m,g)$-classes in H\"older spaces}
Now we prove our H\"older estimates. Our main tool will be the characterisation of H\"older spaces in terms of dyadic decompositions.

The symbols $\sigma(x,\xi)$ considered in this section, belong to Weyl-H\"ormander classes $S(m^{-n\varepsilon},g),$ which are associated to the metric
$$g_{(x,\xi)}(dx,d\xi)=m(x,\xi)^{-2}(\langle \xi \rangle^2 dx^2+d\xi^2),$$ $x,\xi\in\mathbb{R}^n,
$
and  the weight
$
  m(x,\xi)=(a(x,\xi)+\langle \xi\rangle)^{\frac{1}{2}}.
$
The symbol $a,$ which is assumed positive and classical, is the principal part of the second order differential operator $L$ defined by
\begin{equation}\label{ST'}
  {Lf}=-\sum_{ij}a_{ij}(x)\frac{\partial^2}{\partial{x_i\partial x_j}}f+[\textnormal{partial derivatives of lower order }]f,\,\,\, f\in C_{0}(\mathbb{R}^n).
\end{equation}
We also assume for every $x\in\mathbb{R}^n,$ that the matrix $A(x)=(a_{ij}(x))$ is a positive, semi-definite matrix  of rank $r(x):=\textnormal{rank}(A(x))\geq r_0\geq 1.$ The coefficients $a_{ij}$ are  smooth functions which are uniformly bounded on $\mathbb{R}^n$, together with all their derivatives. Important examples  arise from operators of the form
\begin{equation}\label{Ex1'}
  L=-\sum_{j}X_{j}^{*}X_{j}+X_0,\,\,\,L=-\sum_{j}X_{j}^{*}X_{j},
\end{equation} where $\{X_i\}$ is a system of  vector fields  on $\mathbb{R}^n$ satisfying the H\"ormander condition of order 2 (this means that the vector fields $X_i$ and their commutators provide a basis of $\mathbb{R}^n,$ or equivalently that $\mathbb{R}^{n}=\textnormal{Lie}\{X_i\}_{i}$).
\begin{remark}Classical examples of operators as in \eqref{Ex1} are the following:
\begin{itemize}
 \item the Laplacian $-\Delta_{x}:=-\sum_{i=1}^{n}\partial_{x_i}^2$ on $\mathbb{R}^{n}.$ \item    The heat operator  $-\Delta_x+\partial_{t}$ on $\mathbb{R}^{n+1}.$
     \item The Mumford operator on $\mathbb{R}^4:$
\begin{equation}\label{mumford}
  M=-X_{1}^2-X_{0}=-\partial_\theta^{2}+\cos(\theta)\partial_x-\sin(\theta)\partial_y+\partial_t,\,\,X_1=\partial_\theta,
\end{equation}
where we have denoted by $(\theta,x,y,t)$ the coordinates of $\mathbb{R}^4.$ In this case, $X_0=-M-X_1,$ $X_2=[X_1,X_0],$ $X_3=[X_1,X_2]$ and $\textnormal{span}\{ X_0,X_1,X_2,X_3\}=\mathbb{R}^4,$ which shows that $M$ satisfies the H\"ormander condition of order 2.
\item The Kolmogorov operator on $\mathbb{R}^{3}:$
\begin{equation}\label{Kolmogorov}
  K=-X_1^{2}-X_0=-\partial_{x}^{2}-x\partial_{y}+\partial_t,\,\,X_{1}=\partial_x.
\end{equation} A similar analysis as in the Mumford operator shows that $K$ satisfies the H\"ormander condition of order 2.
\item The operator 
\begin{equation} L=\frac{\partial^2}{\partial t^2} +\frac{\partial^2}{\partial x^2}+e^{-\frac{1}{|x|^\delta}}\frac{\partial^2}{\partial y^2} \end{equation} on $\mathbb{R}^3.$
\end{itemize}
\end{remark}
Our starting point is the following lemma which is a slight variation of one proved by J. Delgado (see Lemma 3.2 of \cite{Profe2}) and whose proof is only an obvious variation of  Delgado's proof. We will use the following parameters
\begin{equation}\label{q0}
  Q_0:=r_0+2(n-r_0),\,\,\varepsilon_0:=\frac{Q_0}{2n}-\frac{1}{2}.
\end{equation} 
\begin{lemma} \label{lemma}
Let us assume $\varepsilon_0\leq \varepsilon<\frac{Q_0}{2n}$ and let $\sigma\in S(m^{-n\varepsilon},g)$ supported in $R\leq a(x,\xi)+\langle \xi\rangle\leq \omega' R$ for $\omega',R>1.$ Then 
\begin{equation}
\Vert \sigma(x,D_x)f\Vert_{L^\infty(\mathbb{R}^n)}\leq C \Vert \sigma\Vert_{[\frac{n}{2}]+1, S(m^{-n\varepsilon},g) }\Vert f \Vert_{L^\infty(\mathbb{R}^n)},
\end{equation} holds true for every $f\in L^{\infty}(\mathbb{R}^n).$ Moreover, the constant $C>0$ does not depend on the parameters $R$ and $\omega'.$
\end{lemma}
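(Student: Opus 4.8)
The plan is to reduce the claimed $L^\infty$ bound to an $L^1$ estimate for the Schwartz kernel of $\sigma(x,D_x)$ and then to exploit the localisation of $\sigma$ in the shell $R\le a(x,\xi)+\langle\xi\rangle\le \omega' R$ together with the symbol inequalities coming from $S(m^{-n\varepsilon},g)$. Writing $z=x-y$, the operator has kernel
\begin{equation}
K(x,y)=\int_{\mathbb{R}^n}e^{i2\pi z\cdot\xi}\sigma(x,\xi)\,d\xi ,
\end{equation}
and since $\|\sigma(x,D_x)f\|_{L^\infty}\le \big(\sup_x\int_{\mathbb{R}^n}|K(x,y)|\,dy\big)\|f\|_{L^\infty}$, everything reduces to a uniform bound for $\sup_x\|K(x,\cdot)\|_{L^1}$. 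First I would record the two facts that drive the estimate: on the support of $\sigma$ one has $m(x,\xi)^2=a(x,\xi)+\langle\xi\rangle\sim R$, so reading the defining inequalities \eqref{Smgdefi} in coordinates gives $|\partial_\xi^\alpha\sigma(x,\xi)|\lesssim \|\sigma\|_{|\alpha|,S(m^{-n\varepsilon},g)}\,R^{-(n\varepsilon+|\alpha|)/2}$; and, because $a(x,\cdot)$ is a nonnegative quadratic form of rank $r(x)\ge r_0$, the $\xi$-support has Lebesgue measure controlled by $R^{Q_0/2}$ (up to powers of $\omega'$), where $Q_0=2n-r_0$ is exactly the exponent in \eqref{q0} --- quadratic growth in the $r(x)$ elliptic directions and linear growth in the $n-r(x)$ degenerate ones.

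The core of the argument is a Plancherel-plus-integration-by-parts splitting of $\|K(x,\cdot)\|_{L^1}$ at the natural scale $|z|\sim R^{-1/2}$. On the near-diagonal region $|z|\le R^{-1/2}$ I would use Cauchy--Schwarz and Plancherel,
\begin{equation}
\int_{|z|\le R^{-1/2}}|K(x,z)|\,dz\le \big|\{|z|\le R^{-1/2}\}\big|^{1/2}\Big(\int_{\mathbb{R}^n}|\sigma(x,\xi)|^2\,d\xi\Big)^{1/2},
\end{equation}
while on the far region $|z|>R^{-1/2}$ I would integrate by parts $N=[\tfrac n2]+1$ times in $\xi$ (this is precisely where the order $[\tfrac n2]+1$ of the seminorm enters) and apply Cauchy--Schwarz with the weight $|z|^{-2N}$, whose integral over $|z|>R^{-1/2}$ converges exactly because $2N>n$; Plancherel then turns $\int|z|^{2N}|K|^2\,dz$ into $\sum_{|\alpha|=N}\int|\partial_\xi^\alpha\sigma|^2\,d\xi$. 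Feeding in the pointwise symbol bounds and the measure of the support, both pieces balance at $|z|\sim R^{-1/2}$ and produce a factor $R^{-n\varepsilon/2+Q_0/4-n/4}=R^{-n(\varepsilon-\varepsilon_0)/2}$. This is the decisive computation: the hypothesis $\varepsilon\ge\varepsilon_0$ is equivalent to the exponent being nonpositive, so the bound is uniform in $R$ (and $\le C$ since $R>1$).

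Finally I would address uniformity in $\omega'$, which I expect to be the main obstacle. The crude measure bound above carries spurious powers of $\omega'$, so to remove them I would decompose the thick shell into the dyadic sub-shells $\{2^jR\le a+\langle\xi\rangle\le 2^{j+1}R\}$, $0\le j\lesssim\log_2\omega'$, apply the previous estimate to each (with constant independent of the position $2^jR$ of the piece), and sum the resulting geometric series $\sum_j(2^jR)^{-n(\varepsilon-\varepsilon_0)/2}$, which for $\varepsilon>\varepsilon_0$ is dominated by its first term and hence is bounded independently of $\omega'$. The delicate point, inherited from Delgado's Lemma 3.2 in \cite{Profe2}, is the sharp control of the degenerate directions so that no logarithmic loss in $\omega'$ survives at the endpoint $\varepsilon=\varepsilon_0$; granting the anisotropic analysis of the support in the $r(x)$ elliptic versus $n-r(x)$ degenerate directions, the estimate closes and the constant $C$ depends only on $n$, $r_0$, $\varepsilon$ and the seminorm $\|\sigma\|_{[\frac n2]+1,S(m^{-n\varepsilon},g)}$.
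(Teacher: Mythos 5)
The paper does not actually prove this lemma: it defers entirely to Lemma 3.2 of Delgado \cite{Profe2}, asserting that the proof is ``an obvious variation'' of the one there. Your reconstruction follows what is surely the intended Fefferman-type route, and the core bookkeeping is correct: reduction to $\sup_x\Vert K(x,\cdot)\Vert_{L^1}$, the pointwise bounds $|\partial_\xi^\alpha\sigma(x,\xi)|\lesssim \Vert\sigma\Vert_{|\alpha|}\,R^{-(n\varepsilon+|\alpha|)/2}$ read off from \eqref{Smgdefi} because $m(x,\xi)^2=a(x,\xi)+\langle\xi\rangle\sim R$ on the support, the measure bound $R^{Q_0/2}$ for the $\xi$-support, and the splitting at $|z|\sim R^{-1/2}$ with $N=[\frac n2]+1$ integrations by parts; both halves do produce the exponent $R^{-n(\varepsilon-\varepsilon_0)/2}\le 1$. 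One caveat you share with the paper: the measure estimate $R^{r(x)/2}\cdot R^{\,n-r(x)}\le R^{Q_0/2}$ requires the $r_0$ nonzero eigenvalues of $A(x)$ to be bounded below uniformly in $x$, not merely $\textnormal{rank}\,A(x)\ge r_0$; this is part of Delgado's standing hypotheses and should be made explicit.

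The genuine gap is exactly where you flag it and then wave it away: uniformity in $\omega'$ at the endpoint $\varepsilon=\varepsilon_0$. Your dyadic decomposition of the thick shell yields roughly $\log_2\omega'$ pieces, each contributing $(2^jR)^{-n(\varepsilon-\varepsilon_0)/2}$; at $\varepsilon=\varepsilon_0$ every term equals $1$, the sum is $\log_2\omega'$, and writing ``granting the anisotropic analysis the estimate closes'' is assuming the conclusion. Moreover, no refinement can close it as stated: in the elliptic case ($a(x,\xi)=|\xi|^2$, $Q_0=2n$, $\varepsilon_0=0$) the endpoint claim with a constant independent of $\omega'$ would say that $S^0_{1,0}$ symbols cut off to arbitrarily thick annuli are uniformly bounded on $L^\infty$; letting $\omega'\to\infty$ and testing on band-limited $f$ this would force $L^\infty$-boundedness of arbitrary $S^0_{1,0}$ multipliers, which is false (the double Riesz transform maps $L^\infty$ only into $BMO$). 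So you must either accept a constant of size $\log\omega'$ or restrict to shells of bounded relative thickness. The latter is all that Theorem \ref{MainT} actually uses, since there $\omega'=4^2\omega$ with $\omega$ a fixed constant; with that restriction your argument is complete and the stated uniformity in $R$ holds.
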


Now we will prove our main theorem.

\begin{theorem} \label{MainT}
Let us assume $\varepsilon_0\leq \varepsilon<\frac{Q_0}{2n}$ and let $\sigma\in S(m^{-n\varepsilon},g).$  Then 
the pseudo-differential operator $\sigma(x,D_x)$ extends to a bounded operator on $\Lambda^s(\mathbb{R}^n)$ for all $0<s<1.$ Moreover, there exists $\ell$ such that \begin{equation}
\Vert \sigma(x,D_x)f\Vert_{\Lambda^s(\mathbb{R}^n)}\leq C \Vert \sigma\Vert_{\ell, S(m^{-n\varepsilon},g) }\Vert f \Vert_{\Lambda^s(\mathbb{R}^n)}.
\end{equation}
\end{theorem}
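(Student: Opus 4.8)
The plan is to combine the uniform $L^\infty$ bound of Lemma \ref{lemma} with the Littlewood--Paley characterisation of $\Lambda^s$, reducing the Hölder estimate to a summable family of frequency-localised $L^\infty$ estimates; the summability is exactly what $0<s<1$ provides. The whole argument hinges on the fact that the constant in Lemma \ref{lemma} does \emph{not} depend on the shell parameters $R$ and $\omega'$.

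First I would fix a dyadic partition of unity in the frequency variable, $1=\sum_{j\ge 0}\psi_j(\xi)$, with $\psi_0$ supported in $\langle\xi\rangle\lesssim 1$ and $\psi_j(\xi)=\psi(2^{-j}\xi)$ supported in $\langle\xi\rangle\sim 2^{j}$ for $j\ge1$, and set $\sigma_j:=\psi_j\,\sigma$. The first task is to check that $\sigma_j\in S(m^{-n\varepsilon},g)$ \emph{uniformly in} $j$. The key observation is that $\psi_j\in S(1,g)$ with seminorms independent of $j$: reading off \eqref{Smgdefi}, the $g$-length of a pure $\xi$-covector is $m^{-1}$, and on $\operatorname{supp}\psi_j$ one has $m^{2}=a+\langle\xi\rangle\gtrsim\langle\xi\rangle\sim 2^{j}$, so the elementary bound $|\partial_\xi^\alpha\psi_j|\lesssim 2^{-j|\alpha|}\lesssim m^{-|\alpha|}$ matches the requirement precisely (and $\psi_j$ carries no $x$-dependence). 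By the product rule for these classes, $\sigma_j=\psi_j\sigma$ then lies in $S(m^{-n\varepsilon},g)$ with $\Vert\sigma_j\Vert_{\ell,S(m^{-n\varepsilon},g)}\le C_\ell\Vert\sigma\Vert_{\ell,S(m^{-n\varepsilon},g)}$, uniformly in $j$.

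Since $a\ge0$ is of second order, on $\operatorname{supp}\psi_j$ we have $2^{j}\lesssim a+\langle\xi\rangle\lesssim 2^{2j}$, so each $\sigma_j$ is supported in a shell $R\le a+\langle\xi\rangle\le\omega' R$ with $R\sim 2^{j}$ and $\omega'\sim 2^{j}$. Because the constant in Lemma \ref{lemma} is independent of $R$ and $\omega'$, the growing shell width is harmless and we obtain $\Vert\sigma_j(x,D_x)h\Vert_{L^\infty}\le C\Vert\sigma\Vert_{\ell,S(m^{-n\varepsilon},g)}\Vert h\Vert_{L^\infty}$ with $C$ uniform in $j$. I would then invoke the Littlewood--Paley description of $\Lambda^s$: for $f\in\Lambda^s$ one has $\Vert\tilde\psi_j(D)f\Vert_{L^\infty}\lesssim 2^{-js}\Vert f\Vert_{\Lambda^s}$, where $\tilde\psi_j$ is a fattened cutoff equal to $1$ on $\operatorname{supp}\psi_j$. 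Writing $\sigma_j(x,D_x)=\sigma_j(x,D_x)\tilde\psi_j(D)$ and applying the previous bound yields $\Vert\sigma_j(x,D_x)f\Vert_{L^\infty}\lesssim 2^{-js}\Vert f\Vert_{\Lambda^s}$.

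Finally I would reassemble $\sigma(x,D_x)f=\sum_{j}\sigma_j(x,D_x)f$. Because $\sigma_j$ is frequency-supported in $\langle\xi\rangle\sim 2^{j}$ and its $x$-derivatives grow only like $(m^{-1}\langle\xi\rangle)^{|\beta|}\lesssim 2^{j|\beta|/2}$, the $x$-frequency spread is $\lesssim 2^{j/2}\ll 2^{j}$, so the spectrum of each piece $\sigma_j(x,D_x)f$ lies in a ball $|\eta|\lesssim 2^{j}$. A standard reconstruction lemma, valid for $s>0$, asserts that a series of such spectrally-localised pieces with $L^\infty$ norms $\lesssim 2^{-js}$ sums to an element of $\Lambda^s$ with controlled norm; this gives $\Vert\sigma(x,D_x)f\Vert_{\Lambda^s}\lesssim\Vert f\Vert_{\Lambda^s}$ and identifies an admissible $\ell$ (namely $[\tfrac{n}{2}]+1$ plus the finitely many extra derivatives used to control the cutoffs). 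The main obstacle I anticipate is Step one: reconciling the \emph{isotropic} frequency decomposition forced by the $\Lambda^s$ characterisation with the \emph{anisotropic} shells $a+\langle\xi\rangle\sim R$ natural to the calculus, that is, establishing the uniform seminorm bounds for $\sigma_j$ together with the ball-localisation of the output. It is precisely the $R,\omega'$-independence of Lemma \ref{lemma} that renders the fat shells innocuous and allows the argument to close.
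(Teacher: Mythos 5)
Your route is genuinely different from the paper's. The paper first proves the estimate for Fourier multipliers, where $\sigma(D_x)$ commutes exactly with the Littlewood--Paley pieces $\psi_l(\mathcal{R})$, and then passes to general symbols by freezing the spatial variable ($\sigma(x,D_x)f(x)=\sigma_z(D_x)f(x)$ at $z=x$, followed by a supremum over $z$); it therefore never has to ask where the output of an $x$-dependent frequency-localised piece lives in frequency. You instead decompose the symbol itself, $\sigma=\sum_j\psi_j\sigma$, and reassemble via a spectral reconstruction lemma. Most of this is sound: the uniform membership $\sigma_j\in S(m^{-n\varepsilon},g)$, the identification of the shells $R\sim 2^j$, $\omega'\sim 2^j$, the exploitation of the $R,\omega'$-independence in Lemma \ref{lemma}, and the bound $\Vert\sigma_j(x,D_x)f\Vert_{L^\infty}\lesssim 2^{-js}\Vert f\Vert_{\Lambda^s}$ are all correct. (One small repair: to get $|\partial_\xi^\alpha\psi_j|\lesssim 2^{-j|\alpha|}\lesssim m^{-|\alpha|}$ you need the \emph{upper} bound $m^2=a+\langle\xi\rangle\lesssim\langle\xi\rangle^2\sim 2^{2j}$, i.e.\ $m\lesssim 2^j$, not the lower bound $m^2\gtrsim\langle\xi\rangle$ that you quote.)

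The genuine gap is the assertion that ``the spectrum of each piece $\sigma_j(x,D_x)f$ lies in a ball $|\eta|\lesssim 2^j$.'' This is false as stated. Denoting by $\widehat{\sigma_j}(\zeta,\xi)$ the Fourier transform of $\sigma_j$ in the $x$-variable, one has $\mathcal{F}\bigl(\sigma_j(\cdot,D_x)f\bigr)(\eta)=\int \widehat{\sigma_j}(\eta-\xi,\xi)\widehat{f}(\xi)\,d\xi$; the $\xi$-integration is indeed confined to $\langle\xi\rangle\sim 2^j$, but $\widehat{\sigma_j}(\zeta,\xi)$ is not compactly supported in $\zeta$: the derivative bounds $|\partial_x^\beta\sigma_j|\lesssim 2^{j|\beta|/2}$ give only rapid \emph{decay} for $|\zeta|\gg 2^{j/2}$, not vanishing. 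Since your reconstruction lemma requires exact ball support of the spectrum, it cannot be applied to $\sigma_j(x,D_x)f$ directly, and without it the reassembly does not close. The standard repair is a second decomposition in the $x$-frequency: split $\sigma_j=\sigma_j^{\flat}+\sigma_j^{\sharp}$ with $\widehat{\sigma_j^{\flat}}(\cdot,\xi)$ supported in $|\zeta|\le 2^{j-2}$, so that $\sigma_j^{\flat}(x,D_x)f$ has spectrum in $|\eta|\le 2^{j+2}$ and your lemma applies, while $\sigma_j^{\sharp}$ has seminorms $O(2^{-jN})$ for every $N$ (by the $2^{j|\beta|/2}$ growth of the $x$-derivatives) and its contribution can be summed crudely once its $L^\infty$-boundedness is checked. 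With that step supplied your argument goes through and yields an admissible $\ell$ only slightly larger than $[\tfrac{n}{2}]+1$; as written, however, the key localisation claim is not justified.
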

\begin{remark}
If  we  suppose that $L$ is an elliptic operator, then we deduce that $Q_0=2n,$ $\varepsilon_0=0.$ Consequently,  we recover the classical H\"older estimate due to R. Beals for $S(1,g)$-classes (see Theorem 4.1 of Beals \cite{Be} and Beals \cite{Be2}) and also the H\"older result for the class $S^0(\mathbb{R}^n)$ mentioned in the introduction.
\end{remark}

\begin{proof}[Proof of  Theorem  \ref{MainT}] Let us fix $f\in \Lambda^s(\mathbb{R}^n),$ $0<s<1.$ We will show that some positive  integer $\ell$ satisfies
\begin{equation}
\Vert \sigma(x,D_x)f\Vert_{\Lambda^s(\mathbb{R}^n)}\leq C \Vert \sigma\Vert_{\ell, S(m^{-n\varepsilon},g) }\Vert f \Vert_{\Lambda^s(\mathbb{R}^n)},
\end{equation}
for some positive constant $C$ independent of $f$. We split the proof in two parts.  In the first one, we prove the statement of the theorem for Fourier multipliers, i.e., pseudo-differential operators depending only on the Fourier variable $\xi.$ Later, in the second step, we extend the result for general pseudo-differential  operators.\\

\noindent{{\textit{Step 1.}}} Let us consider  the Bessel potential of first order $\mathcal{R}:=(I-\frac{1}{4\pi^2}\Delta_x)^{\frac{1}{2}}$, where $\Delta_x$ is the Laplacian on  $\mathbb{R}^n,$ and let us fix a dyadic
decomposition of its spectrum: we choose a function $\psi_0\in C^{\infty}_{0}(\mathbb{R}),$  $\psi_0(\lambda)=1,$  if $|\lambda|\leq 1,$ and $\psi(\lambda)=0,$ for $|\lambda|\geq 2.$ For every $j\geq 1,$ let us define $\psi_{j}(\lambda)=\psi_{0}(2^{-j}\lambda)-\psi_{0}(2^{-j+1}\lambda).$ Then we have
\begin{eqnarray}\label{deco1}
\sum_{l\in\mathbb{N}_{0}}\psi_{l}(\lambda)=1,\,\,\, \text{for every}\,\,\, \lambda>0.
\end{eqnarray}
We will use the following characterization for H\"older spaces in terms of dyadic decompositions (see Stein \cite{Eli}): $f\in \Lambda^s(\mathbb{R}^n),$ $0<s<1,$ if  and only if, 
\begin{equation}
\Vert f \Vert_{\Lambda^s(\mathbb{R}^n)}\asymp \Vert f\Vert_{B^s_{\infty,\infty}(\mathbb{R}^n)}:=\sup_{l\geq 0}2^{ls}\Vert \psi_l(\mathcal{R})f \Vert_{L^\infty(\mathbb{R}^n)}
\end{equation}
where $\psi_l(\mathcal{R})$ is defined by the functional calculus associated to the self-adjoint operator $\mathcal{R}.$ If $\sigma(D_x)=\sigma(x,D_x  )$ has a symbol depending only on the Fourier variable, then 
\begin{equation}
\Vert \sigma(D_x) f \Vert_{\Lambda^s(\mathbb{R}^n)}\asymp \Vert    \sigma(D_x)f\Vert_{B^s_{\infty,\infty}(\mathbb{R}^n)}:=\sup_{l\geq 0}2^{ls}\Vert \psi_l(\mathcal{R})\sigma(D_x)f \Vert_{L^\infty(\mathbb{R}^n)}.
\end{equation}
Let us note that $\sigma(D_x)$ commutes with $\psi_l(\mathcal{R})$ for every $l,$ that 
\begin{equation}
\psi_l(\mathcal{R})\sigma(D_x)= \sigma(D_x)\psi_l(\mathcal{R})=: \sigma_{l}(D_x),
\end{equation}
where  $\sigma_{l}(D_x)$ has a smooth symbol supported in $\{\xi:2^{l-1}\leq \langle \xi\rangle\leq 2^{l+1}   \}.$ Now let us observe that from the estimate $0\leq a(x,\xi)\lesssim \langle \xi \rangle^2,$ every  $\xi$ in the support of $\sigma_l(\xi)$ satisfies $4^{l-1}\leq  \langle \xi \rangle^2\leq 4^{l+1}$ and 
\begin{equation}
4^{l-1}\leq a(x,\xi)+\langle \xi \rangle\lesssim \langle \xi \rangle^2+\langle \xi \rangle\lesssim 4^{l+1}.
\end{equation}
This analysis shows that the support of $\sigma_{l}$ lies in the set 
\begin{equation}
\{ (x,\xi) \in\mathbb{R}^n_x\times \mathbb{R}^n_\xi:4^{l-1}\leq a(x,\xi)+\langle \xi \rangle \leq \omega 4^{l+1}\},
\end{equation}
for some $\omega>0$ which we can assume larger than one. So, 
by Lemma \ref{lemma} with $\omega'=4^2\omega$ and $R=4^{l-1},$ we deduce that $\sigma_{l}(D_x)$ is a bounded operator on $L^{\infty}(\mathbb{R}^n)$ with operator norm independent on $l.$ In fact,  $\sigma_{l}\in S(m^{-n\varepsilon},g)$ for all $l,$ and consequently
\begin{equation}
\Vert \sigma_{l}(D_x)\Vert_{\mathscr{B}(L^{\infty})}\leq C \Vert \sigma\Vert_{[\frac{n}{2}]+1, S(m^{-n\varepsilon},g) }.\end{equation} So, we have
\begin{align*}
\Vert \psi_l(\mathcal{R})\sigma(D_x) & f \Vert_{L^\infty(\mathbb{R}^n)} \\
&=\Vert   \sigma_l(D_x) \sum_{l'\in \mathbb{N}_{0}}\psi_{l'}(\mathcal{R})f \Vert_{L^\infty(\mathbb{R}^n)}  \\
&=\Vert   \sigma_l(D_x) \sum_{l'=l-1}^{l+1}\psi_{l'}(\mathcal{R})f \Vert_{L^\infty(\mathbb{R}^n)}  \\
&\leq \sum_{l'=l-1}^{l+1} \Vert \sigma_{l}(D_x)\Vert_{\mathscr{B}(L^{\infty})} \Vert    \psi_{l'}(\mathcal{R})f \Vert_{L^\infty(\mathbb{R}^n)} \\
& \lesssim \Vert \sigma\Vert_{[\frac{n}{2}]+1, S(m^{-n\varepsilon},g) } \sum_{l'=l-1}^{l+1}   \Vert    \psi_{l'}(\mathcal{R})f \Vert_{L^\infty(\mathbb{R}^n)}.
\end{align*}
As a consequence, we obtain
\begin{align*}\Vert \sigma(D_x) f \Vert_{\Lambda^s(\mathbb{R}^n)} &\asymp \sup_{l\geq 0}2^{ls}\Vert \psi_l(\mathcal{R})\sigma(D_x)f \Vert_{L^\infty(\mathbb{R}^n)}\\
& \lesssim \Vert \sigma\Vert_{[\frac{n}{2}]+1, S(m^{-n\varepsilon},g) }\sup_{l\geq 0} 2^{ls} \Vert    \psi_l(\mathcal{R})f \Vert_{L^\infty(\mathbb{R}^n)}. 
\end{align*}
Indeed, 
\begin{align*}
    &\sup_{l\geq 0}2^{ls}\Vert \psi_l(\mathcal{R})\sigma(D_x)f \Vert_{L^\infty(\mathbb{R}^n)}\\ &\lesssim \Vert \sigma\Vert_{[\frac{n}{2}]+1, S(m^{-n\varepsilon},g) }\sup_{l\geq 0}2^{ls} \sum_{l'=l-1}^{l+1}   \Vert    \psi_{l'}(\mathcal{R})f \Vert_{L^\infty(\mathbb{R}^n)}\\
    &= \Vert \sigma\Vert_{[\frac{n}{2}]+1, S(m^{-n\varepsilon},g) }\sup_{l\geq 0} \sum_{l'=l-1}^{l+1}  2^{(l-l')s} 2^{l's} \Vert    \psi_{l'}(\mathcal{R})f \Vert_{L^\infty(\mathbb{R}^n)}\\
    &\leq \Vert \sigma\Vert_{[\frac{n}{2}]+1, S(m^{-n\varepsilon},g) }\left( 2^{s} +1+2^{-s} \right)\sup_{l'\geq 0}2^{l's}\Vert    \psi_{l'}(\mathcal{R})f \Vert_{L^\infty(\mathbb{R}^n)}.
\end{align*} Finally, we finish the first step, by observing that
\begin{align*}
    &\sup_{l\geq 0}2^{ls}\Vert \psi_l(\mathcal{R})\sigma(D_x)f \Vert_{L^\infty(\mathbb{R}^n)}\\
    & \lesssim C\Vert \sigma\Vert_{[\frac{n}{2}]+1, S(m^{-n\varepsilon},g) }\sup_{l'\geq 0}\Vert    \psi_{l'}(\mathcal{R})f \Vert_{L^\infty(\mathbb{R}^n)} \asymp \Vert \sigma\Vert_{[\frac{n}{2}]+1, S(m^{-n\varepsilon},g) }\Vert f\Vert_{\Lambda^s(\mathbb{R}^n)},
\end{align*}where $C=\left( 2^{s} +1+2^{-s} \right).$\\

\noindent{{\textit{Step 2.}}} Now, we extend the H\"older boundedness from multipliers to pseudo-differential operators by adapting a technique developed by Ruzhansky and Turunen for operators on compact Lie groups, used in our setting, to the non-compact case of $\mathbb{R}^n$ (see Ruzhansky and Turunen \cite{Ruz} and Ruzhansky and Wirth \cite{Ruz3}). So,  let us define for every $z\in\mathbb{R}^n,$ the multiplier
\begin{center}
$ \sigma_z(D_{x})f(x)=\int_{\mathbb{R}^n}e^{i2\pi \langle x, \eta\rangle}\sigma(z,\eta)\hat{f}(\eta)d\eta.$
\end{center}
For every $x\in\mathbb{R}^n$ we have the equality,
\begin{center}
$ \sigma_{x}(D_{x})f(x)=\sigma(x,D_x)f(x),$
\end{center}
and we can estimate the H\"older norm of the function $\sigma(x,D_x)f,$ as follows
\begin{align*}
\Vert \sigma_{x}(D_x)f(x)\Vert_{\Lambda^s} &\asymp \sup_{l\geq 0}2^{ls} \textnormal{esssup}_{x\in\mathbb{R}^n}|\psi_l(\mathcal{R})\sigma_x(D_x)f(x) |\\
&\leq  \sup_{l\geq 0}2^{ls} \textnormal{esssup}_{x\in\mathbb{R}^n}   \sup_{z\in\mathbb{R}^n}|\psi_l(\mathcal{R})\sigma_z(D_x)f(x) | \\
&=  \sup_{l\geq 0}2^{ls} \textnormal{esssup}_{x\in\mathbb{R}^n}   \sup_{z\in\mathbb{R}^n}| \sigma_z(D_x)  \psi_l(\mathcal{R})f(x) | \\
&\leq \sup_{l\geq 0}2^{ls}  \textnormal{esssup}_{x\in\mathbb{R}^n}   \sup_{z\in\mathbb{R}^n} \textnormal{esssup}_{\varkappa\in\mathbb{R}^n} |\sigma_z(D_\varkappa)  \psi_l(\mathcal{R})f(\varkappa) | \\
&= \sup_{l\geq 0}2^{ls}   \sup_{z\in\mathbb{R}^n} \Vert \sigma_z(D_\varkappa)  \psi_l(\mathcal{R})f(\varkappa) \Vert_{L^\infty(\mathbb{R}^n_\varkappa)} .\\
\end{align*} From the estimate for the operator norm of multipliers proved in the first step, we deduce
\begin{align*}\sup_{z\in\mathbb{R}^n}\Vert \sigma_{z}(D_x) \psi_l(\mathcal{R}) f \Vert_{L^\infty(\mathbb{R}^n)}\lesssim  \Vert \sigma\Vert_{[\frac{n}{2}]+1, S(m^{-n\varepsilon},g) }\Vert \psi_l(\mathcal{R})f\Vert_{L^\infty(\mathbb{R}^n)}. 
\end{align*} So, we have
\begin{equation}
\Vert \sigma_{x}(D_x)f(x)\Vert_{\Lambda^s} \lesssim \Vert \sigma\Vert_{[\frac{n}{2}]+1, S(m^{-n\varepsilon},g) }\Vert f\Vert_{\Lambda^s(\mathbb{R}^n)}. 
\end{equation} Thus, we finish the proof.
\end{proof}

By following the notation in Delgado \cite{Profe2}, we extend the previous result when $0\leq \beta\leq n\varepsilon_0.$

\begin{theorem} \label{MainT'''}
Let us assume $0\leq \beta \leq n\varepsilon_0$ and let $\sigma\in S(m^{-\beta},g).$  Then 
the pseudo-differential operator $\sigma(x,D_x)$ extends to a bounded operator from  $\Lambda^s(\mathbb{R}^n)$ into  $\Lambda^{s-(n\varepsilon_0-\beta)}(\mathbb{R}^n)$  for all $0<s<1$ provided that $0<s-(n\varepsilon_0-\beta)<1.$ 
\end{theorem}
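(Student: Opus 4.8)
The plan is to mirror the two-step argument of Theorem \ref{MainT}, the only genuinely new ingredient being a dyadic \emph{gain} in the operator norm of the localised pieces which is compensated exactly by the loss of $n\varepsilon_0-\beta$ derivatives in the target space. Set $\gamma:=n\varepsilon_0-\beta\geq 0$. As before, fix the Bessel potential $\mathcal{R}=(I-\tfrac{1}{4\pi^2}\Delta_x)^{1/2}$ (whose multiplier symbol is $\langle\xi\rangle$) together with the dyadic partition $\{\psi_l\}_{l\geq 0}$, and use the characterisations $\Vert u\Vert_{\Lambda^s}\asymp\sup_{l}2^{ls}\Vert\psi_l(\mathcal{R})u\Vert_{L^\infty}$ and $\Vert u\Vert_{\Lambda^{s-\gamma}}\asymp\sup_{l}2^{l(s-\gamma)}\Vert\psi_l(\mathcal{R})u\Vert_{L^\infty}$, both valid precisely because $0<s<1$ and $0<s-\gamma<1$.

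The crucial step is a scaling estimate for the localised symbols. Given $\sigma\in S(m^{-\beta},g)$, let $\sigma_l(x,\xi):=\sigma(x,\xi)\psi_l(\langle\xi\rangle)$ be its localisation to the shell $\{2^{l-1}\leq\langle\xi\rangle\leq 2^{l+1}\}$. Since the $\xi$-scale of $g$ is $m\asymp 2^l$ on this shell, matching the scale $2^l$ at which $\psi_l$ varies, one checks that $\psi_l(\langle\xi\rangle)\in S(1,g)$ with seminorms uniform in $l$, so $\sigma_l\in S(m^{-\beta},g)$ uniformly in $l$. Moreover, on the support of $\sigma_l$ one has $\langle\xi\rangle\asymp 2^l$ and $0\leq a(x,\xi)\lesssim\langle\xi\rangle^2$, whence $m=(a+\langle\xi\rangle)^{1/2}\lesssim 2^l$ and therefore $m^{\gamma}\lesssim 2^{l\gamma}$ (recall $\gamma\geq 0$). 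Writing $m^{-\beta}=m^{-n\varepsilon_0}\,m^{\gamma}$ this yields
\[
2^{-l\gamma}\,\sigma_l\in S(m^{-n\varepsilon_0},g),\qquad \Vert 2^{-l\gamma}\sigma_l\Vert_{[\frac{n}{2}]+1,\,S(m^{-n\varepsilon_0},g)}\lesssim \Vert\sigma\Vert_{[\frac{n}{2}]+1,\,S(m^{-\beta},g)},
\]
uniformly in $l$. Applying Lemma \ref{lemma} at the endpoint $\varepsilon=\varepsilon_0$ (the support of $\sigma_l$ lies in a region $R\leq a+\langle\xi\rangle\leq\omega' R$ and the constant there is scale-independent) gives the key bound $\Vert\sigma_l(x,D_x)\Vert_{\mathscr{B}(L^\infty)}\lesssim 2^{l\gamma}\,\Vert\sigma\Vert_{[\frac{n}{2}]+1,\,S(m^{-\beta},g)}$ for all $l\geq 0$.

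First I would treat Fourier multipliers $\sigma(D_x)$. Exactly as in Step 1 of Theorem \ref{MainT}, $\psi_l(\mathcal{R})$ commutes with $\sigma(D_x)$, and $\psi_l(\mathcal{R})\sigma(D_x)=\sigma_l(D_x)\psi_l(\mathcal{R})$ annihilates $\psi_{l'}(\mathcal{R})f$ unless $|l-l'|\leq 1$; thus
\[
2^{l(s-\gamma)}\Vert\psi_l(\mathcal{R})\sigma(D_x)f\Vert_{L^\infty}\lesssim 2^{l(s-\gamma)}2^{l\gamma}\Vert\sigma\Vert\sum_{l'=l-1}^{l+1}\Vert\psi_{l'}(\mathcal{R})f\Vert_{L^\infty}=2^{ls}\Vert\sigma\Vert\sum_{l'=l-1}^{l+1}\Vert\psi_{l'}(\mathcal{R})f\Vert_{L^\infty}.
\]
The gain $2^{l\gamma}$ from the operator norm cancels the loss $2^{-l\gamma}$ coming from the target norm $\Lambda^{s-\gamma}$; writing $2^{ls}=2^{(l-l')s}2^{l's}$ and taking the supremum over $l$ then gives $\Vert\sigma(D_x)f\Vert_{\Lambda^{s-\gamma}}\lesssim(2^s+1+2^{-s})\Vert\sigma\Vert\,\Vert f\Vert_{\Lambda^s}$. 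In the second step I would freeze the spatial variable, setting $\sigma_z(D_x)f(x)=\int e^{i2\pi x\cdot\eta}\sigma(z,\eta)\hat f(\eta)\,d\eta$ so that $\sigma_x(D_x)f(x)=\sigma(x,D_x)f(x)$. Commuting $\psi_l(\mathcal{R})$ past the multiplier $\sigma_z(D_x)$ and bounding the essential supremum over $x$ by the supremum over the frozen parameter $z$, the displayed dyadic estimate applied to each frozen symbol $\sigma(z,\cdot)$ (which lies in $S(m^{-\beta},g)$ uniformly in $z$) yields $\sup_z\Vert\sigma_z(D_x)\psi_l(\mathcal{R})f\Vert_{L^\infty}\lesssim 2^{l\gamma}\Vert\sigma\Vert\,\Vert\psi_l(\mathcal{R})f\Vert_{L^\infty}$, and the same cancellation closes the argument.

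I expect the only delicate point to be the uniform-in-$l$ control asserted in the scaling step: one must verify that multiplication by $\psi_l(\langle\xi\rangle)$ costs nothing in the $S(\cdot,g)$-seminorms (this is exactly where the matching of the $\xi$-scale $m\asymp 2^l$ of $g$ with the dyadic scale enters) and that the weight comparison $m^{\gamma}\lesssim 2^{l\gamma}$ survives differentiation with a constant independent of $l$. Everything else is a repetition of Theorem \ref{MainT}, the decisive structural feature being that the dyadic growth $2^{l\gamma}$ of the $L^\infty$ operator norm coincides precisely with the dyadic order $2^{l(n\varepsilon_0-\beta)}$ of the regularity loss.
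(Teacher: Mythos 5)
Your argument is correct, but it takes a genuinely different route from the paper. The paper proves Theorem \ref{MainT'''} in three lines by factorizing $\sigma(x,D_x)=m(x,D)^{\gamma}\,m(x,D)^{-\gamma}\sigma(x,D_x)$ with $\gamma=n\varepsilon_0-\beta$: by the symbolic calculus the composition $m(x,D)^{-\gamma}\sigma(x,D_x)$ has symbol in $S(m^{-n\varepsilon_0},g)$, hence is $\Lambda^s$-bounded by Theorem \ref{MainT}, and one then invokes the continuity of $m(x,D)^{\gamma}$ from $\Lambda^s(\mathbb{R}^n)$ into $\Lambda^{s-\gamma}(\mathbb{R}^n)$. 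You instead re-run the dyadic machinery of Theorem \ref{MainT} directly, showing that the localised pieces $\sigma_l$ satisfy $\Vert\sigma_l(x,D_x)\Vert_{\mathscr{B}(L^\infty)}\lesssim 2^{l\gamma}$ via the pointwise comparison $m\lesssim 2^l$ on the shell $\langle\xi\rangle\asymp 2^l$ (a comparison that is indeed harmless under differentiation, since the $S(\cdot,g)$-seminorms bound derivatives pointwise by the weight), and then absorb the growth $2^{l\gamma}$ into the weaker dyadic norm of $\Lambda^{s-\gamma}$. What each approach buys: the paper's factorization is shorter but rests on two ingredients it does not justify --- the composition calculus in the $S(m,g)$ setting (strictly speaking $m(x,D)^{\gamma}m(x,D)^{-\gamma}$ is the identity only modulo lower-order remainders unless the powers are defined by functional calculus) and, more importantly, the mapping property $m(x,D)^{\gamma}:\Lambda^s\to\Lambda^{s-\gamma}$, which is asserted without proof and whose verification would essentially require your dyadic scaling estimate anyway. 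Your argument is self-contained, uses only the $L^\infty$ lemma at the endpoint $\varepsilon=\varepsilon_0$ plus the Besov characterisation of the two H\"older norms (this is where both hypotheses $0<s<1$ and $0<s-\gamma<1$ enter), and delivers the quantitative seminorm bound directly; its only cost is the uniform-in-$l$ verification that $\psi_l(\langle\xi\rangle)\in S(1,g)$ and $m^{\gamma}\lesssim 2^{l\gamma}$ on the support, which you correctly identify and which holds because $m^2=a+\langle\xi\rangle\lesssim\langle\xi\rangle^2$ there.
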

\begin{proof}
Let us consider the following factorization for $\sigma(x,D_x):$
\begin{eqnarray}
\sigma(x,D_x)=m(x,D)^{\gamma}m(x,D)^{-\gamma}\sigma(x,D_x), \,\,\gamma:=n\varepsilon_0-\beta.
\end{eqnarray} Taking into account that  $m(x,D)^{-\gamma}$ has symbol in $S(m^{-\gamma},g),$  the symbol of the operator $m(x,D)^{-\gamma}\sigma(x,D_x)$ belongs to $S(m^{-n\varepsilon_0},g).$ Consequently, we deduce the boundedness of $m(x,D)^{-\gamma}\sigma(x,D_x)$ on $\Lambda^s(\mathbb{R}^n)$ and from the continuity of $m(x,D)^\gamma$ from  $\Lambda^s(\mathbb{R}^n)$ into  $\Lambda^{s-\gamma}(\mathbb{R}^n)$  we finish the proof of the theorem.
\end{proof}

Let us observe that for all $-\infty<s<\infty$ the Besov space $B^s_{\infty,\infty}(\mathbb{R}^n)$ is defined by the norm.
\begin{equation}\Vert f\Vert_{B^s_{\infty,\infty}(\mathbb{R}^n)}:=\sup_{l\geq 0}2^{ls}\Vert \psi_l(\mathcal{R})f \Vert_{L^\infty(\mathbb{R}^n)}.
\end{equation}

Clearly, a look to the proof of our results above gives the following result.
 \begin{corollary}
 Let us assume $\varepsilon_0\leq \varepsilon<\frac{Q_0}{2n}$ and let $\sigma\in S(m^{-n\varepsilon},g).$  Then 
the pseudo-differential operator $\sigma(x,D_x)$ extends to a bounded operator on $B^s_{\infty,\infty}(\mathbb{R}^n)$ for all $-\infty<s<\infty.$ Moreover, there exists $\ell$ such that \begin{equation}
\Vert \sigma(x,D_x)f\Vert_{B^s_{\infty,\infty}(\mathbb{R}^n) }\leq C \Vert \sigma\Vert_{\ell, S(m^{-n\varepsilon},g) }\Vert f \Vert_{B^s_{\infty,\infty}(\mathbb{R}^n)}.
\end{equation} Moreover,  for $0\leq \beta \leq n\varepsilon_0$ and $\sigma\in S(m^{-\beta},g),$
the pseudo-differential operator $\sigma(x,D_x)$ extends to a bounded operator from  $B^s_{\infty,\infty}(\mathbb{R}^n)$ into  $B^{s-(n\varepsilon_0-\beta)}(\mathbb{R}^n)$  for all $-\infty<s<\infty.$ 
 \end{corollary}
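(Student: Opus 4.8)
The plan is to observe that the dyadic argument in the proof of Theorem \ref{MainT} never genuinely used the restriction $0<s<1$: that hypothesis entered only once, through the norm equivalence $\Vert f\Vert_{\Lambda^s}\asymp\Vert f\Vert_{B^s_{\infty,\infty}}$, which holds only for $0<s<1$. Since here both the source and target norms are the Besov norms $\Vert\cdot\Vert_{B^s_{\infty,\infty}}$ by definition, I would simply re-run the dyadic argument working directly with $\sup_{l\geq0}2^{ls}\Vert\psi_l(\mathcal{R})\,\cdot\,\Vert_{L^\infty}$. First I would treat Fourier multipliers $\sigma(D_x)$ exactly as in Step 1: the operators $\sigma_l(D_x)=\psi_l(\mathcal{R})\sigma(D_x)=\sigma(D_x)\psi_l(\mathcal{R})$ have symbols supported in $\{4^{l-1}\leq a(x,\xi)+\langle\xi\rangle\leq\omega 4^{l+1}\}$, so Lemma \ref{lemma} yields $\Vert\sigma_l(D_x)\Vert_{\mathscr{B}(L^\infty)}\lesssim\Vert\sigma\Vert_{[\frac{n}{2}]+1,S(m^{-n\varepsilon},g)}$ uniformly in $l$.

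The decisive point is the almost-orthogonality identity $\psi_l(\mathcal{R})\sigma(D_x)f=\sigma_l(D_x)\sum_{l'=l-1}^{l+1}\psi_{l'}(\mathcal{R})f$, which after multiplying by $2^{ls}$ and taking the supremum produces the factor $\sum_{l'=l-1}^{l+1}2^{(l-l')s}$. Since $|l-l'|\leq1$, this sum equals $2^{s}+1+2^{-s}$, a finite constant for every real $s$ with no constraint whatsoever, and I therefore obtain $\Vert\sigma(D_x)f\Vert_{B^s_{\infty,\infty}}\lesssim\Vert\sigma\Vert_{[\frac{n}{2}]+1,S(m^{-n\varepsilon},g)}\Vert f\Vert_{B^s_{\infty,\infty}}$ for all $s\in\mathbb{R}$. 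Passing from multipliers to the full operator $\sigma(x,D_x)$ is then carried out by the freezing technique of Step 2, introducing the frozen multipliers $\sigma_z(D_x)$ and estimating $\sup_{z}\Vert\sigma_z(D_x)\psi_l(\mathcal{R})f\Vert_{L^\infty}$ by the multiplier bound just obtained; this step is entirely insensitive to the value of $s$.

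For the second assertion (with $0\leq\beta\leq n\varepsilon_0$ and $\sigma\in S(m^{-\beta},g)$), I would reproduce the factorization of Theorem \ref{MainT'''}, writing $\sigma(x,D_x)=m(x,D)^{\gamma}\,m(x,D)^{-\gamma}\sigma(x,D_x)$ with $\gamma=n\varepsilon_0-\beta$. The operator $m(x,D)^{-\gamma}\sigma(x,D_x)$ has symbol in $S(m^{-n\varepsilon_0},g)$, hence is bounded on $B^s_{\infty,\infty}(\mathbb{R}^n)$ for all $s$ by the first part, while $m(x,D)^{\gamma}$ maps $B^s_{\infty,\infty}(\mathbb{R}^n)$ into $B^{s-\gamma}_{\infty,\infty}(\mathbb{R}^n)$; composing the two gives the claimed mapping on the full Besov scale $-\infty<s<\infty$.

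The only step that deserves genuine care, and what I expect to be the main obstacle, is justifying that $m(x,D)^{\gamma}$ shifts the Besov index by exactly $\gamma$ on the \emph{entire} scale $-\infty<s<\infty$, since $m(x,\xi)=(a(x,\xi)+\langle\xi\rangle)^{\frac12}$ is not a pure power of $\langle\xi\rangle$ but carries the hypoelliptic weight $a$. I would handle this by the same dyadic bookkeeping: localising $m(x,D)^{\gamma}$ on each shell $\{4^{l-1}\leq a+\langle\xi\rangle\leq\omega 4^{l+1}\}$, where $m^{\gamma}$ is comparable to $2^{l\gamma}$, and invoking Lemma \ref{lemma} on the associated $S(m^{-n\varepsilon},g)$-normalised pieces to convert the weight gain $2^{l\gamma}$ into the index shift $s\mapsto s-\gamma$, uniformly in $l$ and with constants independent of $s$.
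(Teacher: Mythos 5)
Your proposal is correct and follows exactly the route the paper intends: the paper's entire proof of this corollary is the remark that ``a look to the proof of our results above gives the result,'' and you have spelled out precisely why that look suffices, namely that the restriction $0<s<1$ entered only through the equivalence $\Vert\cdot\Vert_{\Lambda^s}\asymp\Vert\cdot\Vert_{B^s_{\infty,\infty}}$ while the dyadic constant $2^s+1+2^{-s}$ is finite for every real $s$. You are in fact more careful than the paper on the one delicate point (the mapping property of $m(x,D)^{\gamma}$ across the full Besov scale), which the paper asserts without comment in both Theorem \ref{MainT'''} and this corollary.
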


 \subsection{Ruzhansky-Turunen classes in H\"older spaces} 
In this section we prove our H\"older estimate for periodic (toroidal) pseudo-differential operators. Our starting point is the following lemma which is slight variation of one due to J. Delgado (see Lemma 3.6 of \cite{Profe2}) and whose proof is only a repetition of Delgado's proof.

\begin{lemma}\label{lemma} Let $0\leq \varepsilon <1$ and $k:=[\frac{n}{2}]+1,$  let $\sigma:\mathbb{T}^n\times \mathbb{Z}^n\rightarrow \mathbb{C}$ be a symbol such that  $|\Delta_{\xi}^{\alpha}\sigma(x,\xi)|\leq C_{\alpha}\langle \xi \rangle^{-\frac{n}{2}\varepsilon-(1-\varepsilon)|\alpha|},$  for $|\alpha| \leq k.$ Let us assume that $\sigma$ is supported in $\{\xi:|\xi|\leq 1\}$ or $\{\xi:R\leq |\xi|\leq 2R\}$ for some $R>0.$ Then  $a(x,D):L^{\infty}(\mathbb{T}^n)\rightarrow L^{\infty}(\mathbb{T}^n)$ extends to a bounded linear operator with norm operator independent of $R.$ Moreover,\begin{equation}
\Vert \sigma(x,D_x)\Vert_{\mathscr{B}(L^{\infty})}\leq C  \sup\{C_{\alpha}: {|\alpha|\leq k}\}.\end{equation}
\end{lemma}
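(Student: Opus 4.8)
The plan is to reduce the assertion to an $L^{1}$ bound for the right-convolution kernel of $\sigma(x,D_x)$, and then to extract the decisive decay of that kernel by discrete summation by parts together with Plancherel's identity on $\mathbb{T}^n$. Writing
\[
\sigma(x,D_x)f(x)=\int_{\mathbb{T}^n}k(x,x-y)f(y)\,dy,\qquad k(x,z):=\sum_{\xi\in\mathbb{Z}^n}e^{i2\pi\langle z,\xi\rangle}\sigma(x,\xi),
\]
Young's inequality gives $\Vert\sigma(x,D_x)\Vert_{\mathscr{B}(L^\infty)}\leq\sup_{x}\Vert k(x,\cdot)\Vert_{L^1(\mathbb{T}^n)}$, so everything reduces to showing $\sup_{x}\int_{\mathbb{T}^n}|k(x,z)|\,dz\leq C\sup\{C_\alpha:|\alpha|\leq k\}$ with $C$ independent of $R$. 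Since $\sigma(x,\cdot)$ has finite support, $k(x,\cdot)$ is a trigonometric polynomial and all manipulations below are legitimate. When $\sigma$ is supported in $\{|\xi|\leq1\}$ there are only boundedly many frequencies and the estimate is immediate, so I focus on the annulus $R\leq|\xi|\leq2R$ with $R$ large, where $\langle\xi\rangle\asymp R$ and the number of contributing lattice points is $\asymp R^{n}$.

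The first key step is the summation-by-parts identity, obtained by reindexing the sum,
\[
\prod_{j=1}^{n}\bigl(e^{-i2\pi z_j}-1\bigr)^{\alpha_j}\,k(x,z)=\sum_{\xi\in\mathbb{Z}^n}e^{i2\pi\langle z,\xi\rangle}\Delta_\xi^\alpha\sigma(x,\xi).
\]
Because $|e^{-i2\pi z_j}-1|=2|\sin\pi z_j|\asymp|z_j|$ on the fundamental domain, the left-hand side is comparable in modulus to $\bigl(\prod_j|z_j|^{\alpha_j}\bigr)|k(x,z)|$. Hence Plancherel's identity on $\mathbb{T}^n$ yields, for every $\alpha$,
\[
\int_{\mathbb{T}^n}\Bigl(\prod_{j}|z_j|^{\alpha_j}\Bigr)^{2}|k(x,z)|^{2}\,dz\asymp\sum_{\xi}|\Delta_\xi^\alpha\sigma(x,\xi)|^{2}\lesssim C_\alpha^{2}\,R^{-n\varepsilon-2(1-\varepsilon)|\alpha|}R^{n},
\]
using the hypothesis on $\Delta_\xi^\alpha\sigma$ and the lattice-point count. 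Writing $\rho:=1-\varepsilon>0$, the right-hand side is $\lesssim C_\alpha^{2}R^{\rho(n-2|\alpha|)}$.

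The decisive step is a weighted Cauchy--Schwarz inequality whose weight is adapted to the correct concentration scale $|z|\sim R^{-\rho}$ of the kernel (and \emph{not} $R^{-1}$):
\[
\int_{\mathbb{T}^n}|k(x,z)|\,dz\leq\Bigl(\int_{\mathbb{T}^n}\bigl(1+R^{2\rho}|z|^{2}\bigr)^{-k}dz\Bigr)^{1/2}\Bigl(\int_{\mathbb{T}^n}\bigl(1+R^{2\rho}|z|^{2}\bigr)^{k}|k(x,z)|^{2}dz\Bigr)^{1/2}.
\]
Since $2k>n$ (this is exactly where $k=[\tfrac n2]+1$ enters), the change of variables $w=R^{\rho}z$ shows that the first factor is $\asymp R^{-\rho n/2}$. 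For the second factor one expands $(1+R^{2\rho}|z|^{2})^{k}\lesssim\sum_{|\alpha|\leq k}R^{2\rho|\alpha|}\prod_j|z_j|^{2\alpha_j}$ by the multinomial theorem and applies the Plancherel bounds above, getting
\[
\int_{\mathbb{T}^n}\bigl(1+R^{2\rho}|z|^{2}\bigr)^{k}|k(x,z)|^{2}dz\lesssim\Bigl(\sup_{|\alpha|\leq k}C_\alpha^{2}\Bigr)\sum_{|\alpha|\leq k}R^{2\rho|\alpha|}R^{\rho(n-2|\alpha|)}\lesssim\Bigl(\sup_{|\alpha|\leq k}C_\alpha^{2}\Bigr)R^{\rho n}.
\]
Multiplying the two factors, the powers $R^{-\rho n/2}$ and $R^{\rho n/2}$ cancel, giving $\int_{\mathbb{T}^n}|k(x,z)|\,dz\lesssim\sup_{|\alpha|\leq k}C_\alpha$ uniformly in $x$ and $R$, which is the assertion.

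I expect the main obstacle to be pinning down the correct scale $R^{-(1-\varepsilon)}$ for the weight: it is forced by the gain $\langle\xi\rangle^{-(1-\varepsilon)}$ per difference, and only with this scale do the two $R$-powers balance, whereas the naive choice $R^{-1}$ leaves a spurious factor $R^{\varepsilon(2k-n)/2}$ that blows up. A secondary technical point is verifying that $\int_{\mathbb{T}^n}(1+R^{2\rho}|z|^{2})^{-k}dz\asymp R^{-\rho n}$ holds with constants uniform in $R$, in particular as $\varepsilon\uparrow1$, where $\rho\downarrow0$ and the concentration scale approaches the size of the torus; here the standing hypothesis $\varepsilon<1$ guarantees $\rho>0$ and keeps the argument nondegenerate.
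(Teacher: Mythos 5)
The paper gives no proof of this lemma at all: it only remarks that the proof ``is only a repetition'' of Delgado's Lemma 3.6 in \cite{Profe2}. Your argument is correct and self-contained, and it follows exactly the route such kernel lemmas are expected to take: reduce the $L^{\infty}$ bound to $\sup_{x}\Vert k(x,\cdot)\Vert_{L^{1}(\mathbb{T}^{n})}$ via Young's inequality, convert differences of the symbol into factors $|e^{-i2\pi z_j}-1|\asymp |z_j|$ on the kernel by discrete summation by parts, control the resulting weighted $L^{2}$ norms by Plancherel together with the lattice-point count $\#\{\xi: R\leq|\xi|\leq 2R\}\lesssim R^{n}$, and close with Cauchy--Schwarz against the weight $(1+R^{2(1-\varepsilon)}|z|^{2})^{-k}$, which is integrable at scale $R^{-(1-\varepsilon)}$ precisely because $2k>n$. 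Your identification of the concentration scale $R^{-(1-\varepsilon)}$ (rather than $R^{-1}$) is indeed the crux, and the two powers of $R$ cancel exactly as you claim. Two small points you should record if you write this out in full: (i) $\Delta_{\xi}^{\alpha}\sigma(x,\cdot)$ involves the values $\sigma(x,\xi+\gamma)$ for $0\leq\gamma\leq\alpha$, so it is supported in a slightly enlarged annulus; on that set one still has $\langle\xi\rangle\asymp R$ once $R\gtrsim k$, and the remaining range $R\lesssim 1$ is covered by the same finitely-many-frequencies argument you use for $\{|\xi|\leq 1\}$; (ii) the implied constants are allowed to depend on $\varepsilon$ through $\rho=1-\varepsilon$, which is consistent with the statement since $\varepsilon$ is fixed there, so the degeneration as $\varepsilon\uparrow 1$ that you flag is not an issue.
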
 

\begin{theorem}\label{MainT*} Let $0\leq \varepsilon <1$ and $k:=[\frac{n}{2}]+1,$  let $\sigma:\mathbb{T}^n\times \mathbb{Z}^n\rightarrow \mathbb{C}$ be a symbol such that  $|\Delta_{\xi}^{\alpha}\sigma(x,\xi)|\leq C_{\alpha}\langle \xi \rangle^{-\frac{n}{2}\varepsilon-(1-\varepsilon)|\alpha|},$  for $|\alpha| \leq k.$ Then  $a(x,D):\Lambda^s(\mathbb{T}^n)\rightarrow\Lambda^s(\mathbb{T}^n)$ extends to a bounded linear operator for all $0<s<1.$ Moreover,\begin{equation}
\Vert \sigma(x,D_x)\Vert_{\mathscr{B}(\Lambda^s)}\leq C  \sup\{C_{\alpha}: {|\alpha|\leq k}\}.\end{equation}
\end{theorem}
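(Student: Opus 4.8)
The plan is to transcribe verbatim the two-step architecture of the proof of Theorem \ref{MainT} to the periodic setting, replacing the Fourier transform on $\mathbb{R}^n$ by the toroidal Fourier transform on $\mathbb{T}^n\times\mathbb{Z}^n$, the Euclidean Lemma by the periodic $L^\infty$-boundedness Lemma stated just above, and the continuous derivatives $\partial_\xi$ by the difference operators $\Delta_\xi$. First I would record the Littlewood--Paley characterisation of the toroidal H\"older spaces. With $\mathcal{R}=(I-\frac{1}{4\pi^2}\Delta_x)^{1/2}$ on $\mathbb{T}^n$, whose eigenvalue on $e^{i2\pi\langle x,\xi\rangle}$ equals $\langle\xi\rangle$, each $\psi_l(\mathcal{R})$ acts as the Fourier multiplier $\xi\mapsto\psi_l(\langle\xi\rangle)$, and one has the periodic analogue
\begin{equation}
\Vert f\Vert_{\Lambda^s(\mathbb{T}^n)}\asymp\sup_{l\geq0}2^{ls}\Vert\psi_l(\mathcal{R})f\Vert_{L^\infty(\mathbb{T}^n)},\qquad 0<s<1,
\end{equation}
of the characterisation used for Theorem \ref{MainT}.

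\emph{Step 1 (multipliers).} For a symbol $\sigma(\xi)$ depending only on $\xi\in\mathbb{Z}^n$, the multiplier $\sigma(D_x)$ commutes with every $\psi_l(\mathcal{R})$, and $\sigma_l:=\psi_l(\mathcal{R})\sigma(D_x)$ has symbol $\psi_l(\langle\xi\rangle)\sigma(\xi)$ supported in $\{2^{l-1}\leq\langle\xi\rangle\leq 2^{l+1}\}$, an annulus of the form $\{R\leq|\xi|\leq cR\}$ with $R\sim 2^l$ and a fixed dilation ratio (which one splits into finitely many annuli, or absorbs into the constant since the Lemma's bound is independent of $R$). The key verification is that $\sigma_l$ satisfies the hypotheses of the periodic Lemma uniformly in $l$: applying the discrete Leibniz rule to $\Delta^\alpha_\xi(\psi_l(\langle\cdot\rangle)\sigma)$ and using $|\Delta^\beta_\xi\psi_l(\langle\xi\rangle)|\lesssim 2^{-l|\beta|}\sim\langle\xi\rangle^{-|\beta|}\leq\langle\xi\rangle^{-(1-\varepsilon)|\beta|}$ on the support (comparing differences with derivatives by the mean value theorem, and noting $1-\varepsilon\leq1$), one obtains $|\Delta^\alpha_\xi\sigma_l(\xi)|\lesssim\langle\xi\rangle^{-\frac{n}{2}\varepsilon-(1-\varepsilon)|\alpha|}$ for $|\alpha|\leq k$ with constants controlled by $\sup_{|\alpha|\leq k}C_\alpha$, independently of $l$. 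The Lemma then gives $\Vert\sigma_l(D_x)\Vert_{\mathscr{B}(L^\infty)}\lesssim\sup_{|\alpha|\leq k}C_\alpha$ uniformly in $l$. Using the almost-orthogonality $\psi_l(\mathcal{R})\psi_{l'}(\mathcal{R})=0$ for $|l-l'|>1$, I write $\psi_l(\mathcal{R})\sigma(D_x)f=\sigma_l(D_x)\sum_{|l'-l|\leq1}\psi_{l'}(\mathcal{R})f$ and sum the geometric factor $2^{(l-l')s}$ over $|l'-l|\leq1$ exactly as in Theorem \ref{MainT}, yielding the H\"older bound for multipliers.

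\emph{Step 2 (general operators).} I would use the Ruzhansky--Turunen freezing technique. Define the frozen multiplier $\sigma_z(D_x)f(x)=\sum_{\eta\in\mathbb{Z}^n}e^{i2\pi\langle x,\eta\rangle}\sigma(z,\eta)\hat f(\eta)$, so that $\sigma_x(D_x)f(x)=\sigma(x,D_x)f(x)$ on the diagonal. Since the hypothesis $|\Delta^\alpha_\xi\sigma(x,\xi)|\leq C_\alpha\langle\xi\rangle^{-\frac{n}{2}\varepsilon-(1-\varepsilon)|\alpha|}$ holds uniformly in $x$, each $\sigma_z$ is a multiplier satisfying Step 1 with constants uniform in $z$. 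Bounding the diagonal value by the supremum over the frozen parameter and using that $\psi_l(\mathcal{R})$ commutes with $\sigma_z(D_x)$,
\begin{equation}
\Vert\sigma(x,D_x)f\Vert_{\Lambda^s}\asymp\sup_{l\geq0}2^{ls}\,\textnormal{esssup}_{x}|\psi_l(\mathcal{R})\sigma_x(D_x)f(x)|\leq\sup_{l\geq0}2^{ls}\sup_{z}\Vert\sigma_z(D_x)\psi_l(\mathcal{R})f\Vert_{L^\infty},
\end{equation}
and the Step-1 estimate applied to each $\sigma_z$ gives $\Vert\sigma(x,D_x)f\Vert_{\Lambda^s}\lesssim\sup_{|\alpha|\leq k}C_\alpha\,\Vert f\Vert_{\Lambda^s}$.

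Since Theorem \ref{MainT} already follows this exact scheme, the genuinely new content is the uniform verification that the dyadic pieces $\sigma_l$ (and, after freezing, the $\sigma_z$) fall into the scope of the periodic Lemma. The main obstacle, specific to the torus, is that one works with the difference operators $\Delta_\xi$ rather than derivatives, so both the Leibniz expansion and the comparison $|\Delta^\beta_\xi\psi_l(\langle\xi\rangle)|\lesssim\langle\xi\rangle^{-|\beta|}$ must be justified discretely; everything else is a formal transcription of the Euclidean argument, and Step 2 is identical to the compact-group freezing argument of Ruzhansky and Turunen.
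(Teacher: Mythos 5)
Your proposal is correct and follows essentially the same two-step scheme as the paper's own proof: the dyadic Besov characterisation of $\Lambda^s(\mathbb{T}^n)$, reduction of each dyadic block to the periodic $L^\infty$-lemma, and the Ruzhansky--Turunen freezing argument $\sigma_x(D_x)f(x)=\sigma(x,D_x)f(x)$ for the passage from multipliers to general symbols. The only (harmless) divergence is in Step 1, where you localise with the smooth cutoff $\psi_l(\langle\xi\rangle)\sigma(\xi)$ and verify the difference estimates via a discrete Leibniz rule, whereas the paper uses the sharp indicator $\sigma(\xi)\cdot 1_{\{2^{l-1}\leq\langle\xi\rangle\leq 2^{l+1}\}}$ and the identity $\psi_l(\mathcal{R})\sigma(D_x)=\sigma_l(D_x)\psi_l(\mathcal{R})$; your variant is, if anything, the more careful one, since the sharp cutoff does not obviously satisfy the stated difference inequalities at the edges of the annulus.
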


\begin{proof}[Proof of Theorem \ref{MainT*}] Our proof consists of two steps. In the first one, we prove the statement of the theorem for periodic Fourier multipliers, i.e., toroidal pseudo-differential operators depending only on the Fourier variable $\xi.$ Later, in the second step, we extend the result for general periodic  operators.\\

\noindent{{\textit{Step 1.}}} Let us consider the operator $\mathcal{R}:=(I-\frac{1}{4\pi^2}\mathcal{L}_{\mathbb{T}^n})^{\frac{1}{2}},$ where $\mathcal{L}_{\mathbb{T}^n}$ is the Laplacian on the torus $\mathbb{T}^n,$ and let us fix a dyadic
decomposition of its spectrum:  we choose a function $\psi_0\in C^{\infty}_{0}(\mathbb{R}),$  $\psi_0(\lambda)=1,$  if $|\lambda|\leq 1,$ and $\psi(\lambda)=0,$ for $|\lambda|\geq 2.$ For every $j\geq 1,$ let us define $\psi_{j}(\lambda)=\psi_{0}(2^{-j}\lambda)-\psi_{0}(2^{-j+1}\lambda).$ Then we have
\begin{eqnarray}\label{deco1}
\sum_{l\in\mathbb{N}_{0}}\psi_{l}(\lambda)=1,\,\,\, \text{for every}\,\,\, \lambda>0.
\end{eqnarray}
We will use the following characterization for H\"older spaces in terms of dyadic decompositions (see Stein \cite{Eli}): $f\in \Lambda^s(\mathbb{T}^n),$ if  and only if, 
\begin{equation}
\Vert f \Vert_{\Lambda^s(\mathbb{T}^n)}\asymp \Vert f\Vert_{B^s_{\infty,\infty}(\mathbb{T}^n)}:=\sup_{l\geq 0}2^{ls}\Vert \psi_l(\mathcal{R})f \Vert_{L^\infty(\mathbb{T}^n)}
\end{equation}
where $\psi_l(\mathcal{R})$ is defined by the functional calculus associated to the self-adjoint operator $\mathcal{R}.$ If $\sigma(D_x)=\sigma(x,D_x  )$ has a symbol depending only on the Fourier variable, then 
\begin{equation}
\Vert \sigma(D_x) f \Vert_{\Lambda^s(\mathbb{T}^n)}\asymp \Vert    \sigma(D_x)f\Vert_{B^s_{\infty,\infty}(\mathbb{T}^n)}:=\sup_{l\geq 0}2^{ls}\Vert \psi_l(\mathcal{R})\sigma(D_x)f \Vert_{L^\infty(\mathbb{T}^n)}.
\end{equation}
Taking into account that the operator $\sigma(D_x)$ commutes with $\psi_l(\mathcal{R})$ for every $l,$ that 
\begin{equation}
\psi_l(\mathcal{R})\sigma(D_x)= \sigma(D_x)\psi_l(\mathcal{R})= \sigma_{l}(D_x)\psi_l(\mathcal{R})
\end{equation}
where $\sigma_l(D_x)$ is the pseudo-differential operator with symbol $$\sigma_{l}(\xi)=\sigma(\xi) \cdot 1_{\{\xi:2^{l-1}\langle \xi\rangle\leq 2^{l+1}   \}},$$
and that $\sigma_{l}(D_x)$ has a symbol supported in $\{\xi:2^{l-1}\langle \xi\rangle\leq 2^{l+1}   \},$ by Lemma \ref{lemma} we deduce that $\sigma_{l}(D_x)$ is a bounded operator on $L^{\infty}(\mathbb{T}^n)$ with operator norm independent on $l.$ In fact,  $\sigma_{l}$ satisfies the symbol inequalities
$$|\Delta_{\xi}^{\alpha}\sigma_l(\xi)|\leq C_{\alpha}\langle \xi \rangle^{-\frac{n}{2}\varepsilon-(1-\varepsilon)|\alpha|},$$ for all $|\alpha| \leq k,$ and consequently
\begin{equation}
\Vert \sigma_{l}(D_x)\Vert_{\mathscr{B}(L^{\infty})}\leq C \sup\{C_{\alpha}: {|\alpha|\leq k}\}.\end{equation} So, we have
\begin{align*}
\Vert \psi_l(\mathcal{R})\sigma(D_x) & f \Vert_{L^\infty(\mathbb{T}^n)} \\
&=\Vert   \sigma_l(D_x) \psi_l(\mathcal{R})f \Vert_{L^\infty(\mathbb{T}^n)}\leq \Vert \sigma_{l}(D_x)\Vert_{\mathscr{B}(L^{\infty})} \Vert    \psi_l(\mathcal{R})f \Vert_{L^\infty(\mathbb{T}^n)} \\
& \lesssim \sup\{C_{\alpha}: {|\alpha|\leq k}\}\Vert    \psi_l(\mathcal{R})f \Vert_{L^\infty(\mathbb{T}^n)}.
\end{align*}
As a consequence, we obtain
\begin{align*}\Vert \sigma(D_x) f \Vert_{\Lambda^s(\mathbb{T}^n)} &\asymp \sup_{l\geq 0}2^{ls}\Vert \psi_l(\mathcal{R})\sigma(D_x)f \Vert_{L^\infty(\mathbb{T}^n)}\\
& \lesssim \sup\{C_{\alpha}: {|\alpha|\leq k}\}\sup_{l\geq 0} 2^{ls} \Vert    \psi_l(\mathcal{R})f \Vert_{L^\infty(\mathbb{T}^n)}\\
&\asymp \sup\{C_{\alpha}: {|\alpha|\leq k}\}\Vert f\Vert_{\Lambda^s(\mathbb{T}^n)}. 
\end{align*}
\noindent{{\textit{Step 2.}}} Now, we extend the H\"older boundedness from multipliers to pseudo-differential operators by using a technique developed by Ruzhansky, Turunen and  Wirth (see Ruzhansky and Turunen \cite{Ruz} and Ruzhansky and Wirth \cite{Ruz3}). So,  let us define for every $z\in\mathbb{T}^n,$ the multiplier
\begin{center}
$ \sigma_z(D_{x})f(x)=\sum_{\eta\in \mathbb{Z}^n}e^{i2\pi \langle x, \eta\rangle}\sigma(z,\eta)\widehat{f}(\eta).$
\end{center}
For every $x\in\mathbb{T}^n$ we have the equality,
\begin{center}
$ \sigma_{x}(D_{x})f(x)=\sigma(x,D_x)f(x),$
\end{center}
and we can estimate the H\"older norm of the function $\sigma(x,D_x)f,$ as follows
\begin{align*}
\Vert \sigma_{x}(D_x)f(x)\Vert_{\Lambda^s} &\asymp \sup_{l\geq 0}2^{ls} \textnormal{esssup}_{x\in\mathbb{T}^n}|\psi_l(\mathcal{R})\sigma_x(D_x)f(x) |\\
&\leq  \sup_{l\geq 0}2^{ls} \textnormal{esssup}_{x\in\mathbb{T}^n}   \sup_{z\in\mathbb{T}^n}|\psi_l(\mathcal{R})\sigma_z(D_x)f(x) | \\
&=  \sup_{l\geq 0}2^{ls} \textnormal{esssup}_{x\in\mathbb{T}^n}   \sup_{z\in\mathbb{T}^n}| \sigma_z(D_x)  \psi_l(\mathcal{R})f(x) | \\
&\leq \sup_{l\geq 0}2^{ls}  \textnormal{esssup}_{x\in\mathbb{T}^n}   \sup_{z\in\mathbb{T}^n} \textnormal{ess sup}_{\varkappa\in\mathbb{T}^n} |\sigma_z(D_\varkappa)  \psi_l(\mathcal{R})f(\varkappa) | \\
&= \sup_{l\geq 0}2^{ls}   \sup_{z\in\mathbb{T}^n} \Vert \sigma_z(D_\varkappa)  \psi_l(\mathcal{R})f(\varkappa) \Vert_{L^\infty(\mathbb{T}^n)} .\\
\end{align*} From the estimate for the operator norm of multipliers proved in the first step, we deduce
\begin{align*}\sup_{z\in\mathbb{T}^n}\Vert \sigma_{z}(D_x) \psi_l(\mathcal{R}) f \Vert_{L^\infty(\mathbb{T}^n)}\lesssim  \sup\{C_{\alpha}: {|\alpha|\leq k}\}\Vert \psi_l(\mathcal{R})f\Vert_{L^\infty(\mathbb{T}^n)}. 
\end{align*} So, we have
\begin{equation}
\Vert \sigma_{x}(D_x)f(x)\Vert_{\Lambda^s} \lesssim \sup\{C_{\alpha}: {|\alpha|\leq k}\}\Vert f\Vert_{\Lambda^s(\mathbb{T}^n)}. 
\end{equation} Thus, we finish the proof.
\end{proof}

\begin{corollary}\label{MainT''''}  Let $0<\rho\leq 1,$ $0\leq \delta\leq 1,$ $\ell\in\mathbb{N},$ $k:=[\frac{n}{2}]+1,$  and let $A:C^{\infty}(\mathbb{T}^n)\rightarrow C^{\infty}(\mathbb{T}^n)$ be a pseudo-differential operator with symbol $\sigma$ satisfying  
\begin{equation}\label{eqMaint'''''''}
\vert \partial_x^\beta\Delta_{\xi}^{\alpha}\sigma(x,\xi)\vert\leq C_{\alpha}\langle \xi \rangle^{-m-\rho|\alpha|+\delta|\beta|}
\end{equation}
 for all $|\alpha| \leq k,$ $|\beta|\leq \ell.$ Then  $A:\Lambda^s(\mathbb{T}^n)\rightarrow \Lambda^s(\mathbb{T}^n)$ extends to a bounded linear operator for all $0<s<1$ provided that $m\geq \delta \ell+\frac{n}{2}(1-\rho).$
\end{corollary}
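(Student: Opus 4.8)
The plan is to obtain the corollary as a direct consequence of Theorem \ref{MainT*}, using the extra decay in the order $-m$ to compensate for the two features absent from that theorem: the parameter $\rho$ and the spatial derivatives. First I would set $\varepsilon:=1-\rho$, which is admissible because $0<\rho\le 1$ forces $\varepsilon\in[0,1)$. Taking $\beta=0$ in the hypothesis \eqref{eqMaint'''''''} produces the purely frequency-side estimate
\begin{equation}
|\Delta_{\xi}^{\alpha}\sigma(x,\xi)|\leq C_{\alpha}\langle \xi \rangle^{-m-\rho|\alpha|},\qquad |\alpha|\leq k,
\end{equation}
uniformly in $x\in\mathbb{T}^n$. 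With $\varepsilon=1-\rho$ the exponent demanded by Theorem \ref{MainT*} is $-\tfrac{n}{2}(1-\rho)-\rho|\alpha|$, so as soon as $m\geq \tfrac{n}{2}(1-\rho)$ one has $\langle\xi\rangle^{-m-\rho|\alpha|}\leq \langle\xi\rangle^{-\frac{n}{2}(1-\rho)-\rho|\alpha|}$ for $\langle\xi\rangle\geq 1$. Since $\delta\ell\geq 0$, the hypothesis $m\geq \delta\ell+\tfrac{n}{2}(1-\rho)$ certainly guarantees $m\geq \tfrac{n}{2}(1-\rho)$, and therefore $\sigma$ falls within the scope of Theorem \ref{MainT*} with $\varepsilon=1-\rho$; the asserted boundedness on $\Lambda^s(\mathbb{T}^n)$ and the norm estimate follow at once.

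For completeness I would reproduce the two-step mechanism behind Theorem \ref{MainT*}, since it is there that the role of $\delta\ell$ becomes transparent in the general $(\rho,\delta)$ setting. In the first step one treats the frozen multipliers $\sigma_z(D_x)$: decomposing dyadically along the spectrum of the periodic Bessel potential $\mathcal{R}=(I-\tfrac{1}{4\pi^2}\mathcal{L}_{\mathbb{T}^n})^{1/2}$ and invoking Lemma \ref{lemma} on each annulus gives $\Vert \sigma_z(D_x)\psi_l(\mathcal{R})f\Vert_{L^\infty}\lesssim \sup_{|\alpha|\leq k}C_\alpha\,\Vert \psi_l(\mathcal{R})f\Vert_{L^\infty}$, with a constant independent of the frozen point $z$ and of the dyadic index $l$. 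In the second step one restores the genuine operator $\sigma(x,D_x)=\sigma_x(D_x)$ and sums the dyadic pieces through $\Vert g\Vert_{\Lambda^s}\asymp \sup_{l\geq 0}2^{ls}\Vert\psi_l(\mathcal{R})g\Vert_{L^\infty}$, which yields $\Vert\sigma(x,D_x)f\Vert_{\Lambda^s}\lesssim \sup_{|\alpha|\leq k}C_\alpha\,\Vert f\Vert_{\Lambda^s}$.

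The delicate point, and the one I expect to be the main obstacle, lies in the passage from the frozen multipliers to the full operator. Restricting the two-parameter family $(x,z)\mapsto \sigma_z(D_x)f(x)$ to the diagonal $z=x$ does not commute with the spectral truncation $\psi_l(\mathcal{R})$, and controlling the resulting remainder forces one to differentiate $\sigma$ in the spatial variable up to order $\ell$. By \eqref{eqMaint'''''''} each such derivative costs a factor $\langle\xi\rangle^{\delta}$, i.e. an additional $\delta\ell$ in the order; the hypothesis $m\geq \delta\ell+\tfrac{n}{2}(1-\rho)$ is precisely what ensures that, even after this loss is absorbed, the frozen symbols still clear the threshold $\tfrac{n}{2}(1-\rho)$ required by Theorem \ref{MainT*} with $\varepsilon=1-\rho$. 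Once this freezing estimate is in place, the remaining manipulations --- the dyadic summation and the use of the H\"older characterization --- are routine.
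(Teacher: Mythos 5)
Your first paragraph is exactly the paper's proof: take $\beta=0$, set $\varepsilon=1-\rho$, use $m\geq\delta\ell+\tfrac{n}{2}(1-\rho)\geq\tfrac{n}{2}(1-\rho)$ to get $\langle\xi\rangle^{-m-\rho|\alpha|}\leq\langle\xi\rangle^{-\frac{n}{2}(1-\rho)-(1-\varepsilon)|\alpha|}$, and apply Theorem \ref{MainT*}. The ``delicate point'' in your last paragraph is not part of the paper's argument---its freezing step in Theorem \ref{MainT*} uses only a supremum over the frozen variable and never differentiates $\sigma$ in $x$, so in the paper the quantity $\delta\ell$ enters only through the crude bound $\delta|\beta|\leq\delta\ell$ and the spatial-derivative hypothesis is essentially unused.
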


\begin{proof}
Let us  observe that 
$
\langle \xi \rangle^{-m-\rho|\alpha|+\delta|\beta|}\leq \langle \xi \rangle^{-\frac{n}{2}(1-\rho)+\rho|\alpha|},
$ when $m\geq \delta \ell+\frac{n}{2}(1-\rho).$ So, we finish the proof if we apply Theorem \ref{MainT}.
\end{proof}
\begin{remark}
In the proofs of our H\"older estimates we have used the equivalence between the Besov space $B^{s}_{\infty,\infty}(\mathbb{T}^n)$ and the H\"older space $\Lambda^s(\mathbb{T}^n),$ for $0<s<1.$ Consequently,  hypothesis of Theorem \ref{MainT} and Corollary \ref{MainT''''} also assure the boundedness of $\sigma(x,D_x)$ on $B^{s}_{\infty,\infty}(\mathbb{T}^n)$ for all $-\infty<s<\infty.$
\end{remark} 
Besov boundedness results for  operators in Ruzhansky-Turunen classes for  $1<p<\infty$ and $0<q<\infty$ can be found in Cardona \cite{CardBesovCras}, Cardona and Ruzhansky \cite{CardonaRuzhansky2019} and references therein.
 \subsection*{Acknowledgements}  The author wants to express his gratitude to the referee
who pointed out a number of suggestions helping to improve the presentation of the
manuscript.

\bibliographystyle{amsplain}

\end{document}